\newtheorem{theorem}{Theorem}
\theoremstyle{plain}
\newtheorem{corollary}{Corollary}
\newtheorem{definition}{Definition}
\newtheorem{lemma}{Lemma}
\newtheorem{remark}{Remark}
\numberwithin{equation}{section}
\begin{document}
\title[ON\ THE\ CO-ORDINATED CONVEX\ FUNCTIONS]{ON\ THE\ CO-ORDINATED
CONVEX\ FUNCTIONS}
\author{M. Emin \"{O}zdemir$^{\blacksquare }$}
\address{$^{\blacksquare }$Atat\"{u}rk University, K.K. Education Faculty,
Department of Mathematics, 25240, Campus, Erzurum, Turkey}
\email{emos@atauni.edu.tr}
\author{\c{C}etin Y\i ld\i z$^{\blacksquare \spadesuit }$}
\email{yildizc@atauni.edu.tr}
\author{Ahmet Ocak Akdemir$^{\bigstar }$}
\address{$^{\bigstar }$A\u{g}r\i\ \.{I}brahim \c{C}e\c{c}en University,
Faculty of Science and Letters, Department of Mathematics, 04100, A\u{g}r\i
, Turkey}
\email{ahmetakdemir@agri.edu.tr}
\date{April, 2011}
\subjclass[2000]{ Primary 26D15}
\keywords{Hadamard's inequality, co-ordinates, convexity, H\"{o}lder's
inequality, Power mean inequality.\\
$^{\spadesuit }$corresponding author}

\begin{abstract}
In this paper we established new integral inequalities which are more
general results for co-ordinated convex functions on the co-ordinates by
using some classical inequalities.
\end{abstract}

\maketitle

\section{INTRODUCTION}

Let $f:I\subseteq 
\mathbb{R}
\rightarrow 
\mathbb{R}
$ be a convex function defined on the interval $I$ of real numbers and $a<b.$
The following double inequality%
\begin{equation*}
f\left( \frac{a+b}{2}\right) \leq \frac{1}{b-a}\dint\limits_{a}^{b}f(x)dx%
\leq \frac{f(a)+f(b)}{2}
\end{equation*}

is well known in the literature as Hadamard's inequality. Both inequalities
hold in the reversed direction if $f$ is concave.

In \cite{SS}, Dragomir defined convex functions on the co-ordinates as
following;

\begin{definition}
Let us consider the bidimensional interval $\Delta =[a,b]\times \lbrack c,d]$
in $%
\mathbb{R}
^{2}$ with $a<b,$ $c<d.$ A function $f:\Delta \rightarrow 
\mathbb{R}
$ will be called convex on the co-ordinates if the partial mappings $%
f_{y}:[a,b]\rightarrow 
\mathbb{R}
,$ $f_{y}(u)=f(u,y)$ and $f_{x}:[c,d]\rightarrow 
\mathbb{R}
,$ $f_{x}(v)=f(x,v)$ are convex where defined for all $y\in \lbrack c,d]$
and $x\in \lbrack a,b].$ Recall that the mapping $f:\Delta \rightarrow 
\mathbb{R}
$ is convex on $\Delta $ if the following inequality holds, 
\begin{equation*}
f(\lambda x+(1-\lambda )z,\lambda y+(1-\lambda )w)\leq \lambda
f(x,y)+(1-\lambda )f(z,w)
\end{equation*}%
for all $(x,y),(z,w)\in \Delta $ and $\lambda \in \lbrack 0,1].$
\end{definition}

In \cite{SS}, Dragomir established the following inequalities of Hadamard's
type for co-ordinated convex functions on a rectangle from the plane $%
\mathbb{R}
^{2}.$

\begin{theorem}
Suppose that $f:\Delta =[a,b]\times \lbrack c,d]\rightarrow 
\mathbb{R}
$ is convex on the co-ordinates on $\Delta $. Then one has the inequalities;%
\begin{eqnarray}
&&\ f(\frac{a+b}{2},\frac{c+d}{2})  \label{1.1} \\
&\leq &\frac{1}{2}\left[ \frac{1}{b-a}\int_{a}^{b}f(x,\frac{c+d}{2})dx+\frac{%
1}{d-c}\int_{c}^{d}f(\frac{a+b}{2},y)dy\right]  \notag \\
&\leq &\frac{1}{(b-a)(d-c)}\int_{a}^{b}\int_{c}^{d}f(x,y)dxdy  \notag \\
&\leq &\frac{1}{4}\left[ \frac{1}{(b-a)}\int_{a}^{b}f(x,c)dx+\frac{1}{(b-a)}%
\int_{a}^{b}f(x,d)dx\right.  \notag \\
&&\left. +\frac{1}{(d-c)}\int_{c}^{d}f(a,y)dy+\frac{1}{(d-c)}%
\int_{c}^{d}f(b,y)dy\right]  \notag \\
&\leq &\frac{f(a,c)+f(a,d)+f(b,c)+f(b,d)}{4}.  \notag
\end{eqnarray}%
The above inequalities are sharp.
\end{theorem}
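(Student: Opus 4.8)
The plan is to reduce everything to the one-dimensional Hadamard inequality applied to the partial mappings, which are convex by the definition of co-ordinated convexity. For a fixed $y$ the mapping $x\mapsto f(x,y)$ is convex on $[a,b]$, so Hadamard's inequality gives
$$f\left(\frac{a+b}{2},y\right)\leq \frac{1}{b-a}\int_a^b f(x,y)dx\leq \frac{f(a,y)+f(b,y)}{2},$$
and symmetrically, for fixed $x$ the mapping $y\mapsto f(x,y)$ is convex on $[c,d]$, giving the analogous bracketing with the roles of the two variables interchanged. Every link in the five-term chain is obtained by specializing these two scalar inequalities and then integrating in the remaining variable.

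For the first inequality I would apply the left Hadamard bound to $x\mapsto f(x,\tfrac{c+d}{2})$ and to $y\mapsto f(\tfrac{a+b}{2},y)$, then add the two resulting inequalities and divide by $2$. For the second inequality I would take the left Hadamard bound $f(x,\tfrac{c+d}{2})\leq \tfrac{1}{d-c}\int_c^d f(x,y)dy$, integrate over $x\in[a,b]$ and divide by $b-a$; doing the symmetric thing in the other variable and averaging the two outcomes produces the double integral on the right-hand side. The third inequality is the mirror image: I would apply the right Hadamard bound in each variable, integrate in the free variable, and average, which replaces the double integral by the average of the four boundary line-integrals.

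The fourth inequality is purely one-dimensional: I would apply the right Hadamard bound to each of the four boundary integrals, e.g. $\tfrac{1}{b-a}\int_a^b f(x,c)dx\leq \tfrac{f(a,c)+f(b,c)}{2}$, and sum the four estimates. The only point that needs care here is the bookkeeping: each of the four corner values appears in exactly two of the four estimates, so the factor of $\tfrac12$ coming from Hadamard combines with this doubling to yield precisely $\tfrac14\bigl(f(a,c)+f(a,d)+f(b,c)+f(b,d)\bigr)$.

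I do not expect a genuine obstacle; the argument is a disciplined application of the scalar Hadamard inequality together with Fubini to interchange integration and the one-variable estimates. The main thing to get right is the combinatorial accounting in the last step and the consistent choice of which partial mapping is used at each stage. For the sharpness claim I would exhibit an affine function $f(x,y)=\alpha x+\beta y+\gamma$, for which each partial mapping is simultaneously convex and concave; then every application of Hadamard is an equality, so the whole chain collapses to a string of equalities, showing that none of the constants can be improved.
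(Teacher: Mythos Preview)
Your argument is correct and is exactly the standard proof: apply the one-dimensional Hermite--Hadamard inequality to the partial mappings $f_y$ and $f_x$, integrate in the free variable, and average; the sharpness via an affine function is also the usual example. There is nothing to compare against, however, because the present paper does not prove this theorem at all---it is quoted from Dragomir~\cite{SS} as background, and your outline coincides with Dragomir's original proof there.
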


In \cite{BAK}, Bakula and Pe\v{c}ari\'{c} established several Jensen type
inequalities for co-ordinated convex functions and in \cite{HTS}, Hwang 
\textit{et al.} gave a mapping $F$, discussed some properties of this
mapping and proved some Hadamard-type inequalities for Lipschizian mapping
in two variables. In \cite{OZ}, \"{O}zdemir \textit{et al. }established%
\textit{\ }new Hadamard-type inequalities\textit{\ }for co-ordinated $m-$%
convex and $\left( \alpha ,m\right) -$convex functions. On all of these, in 
\cite{OZ2}, the authors proved some Hadamard-type inequalities for
co-ordinated convex functions as followings;

\begin{theorem}
Let $f:\Delta \subset 
\mathbb{R}
^{2}\rightarrow 
\mathbb{R}
$ be a partial differentiable mapping on $\Delta :=[a,b]\times \lbrack c,d]$
in $%
\mathbb{R}
^{2}$ with $a<b$ and $c<d.$ If $\left\vert \frac{\partial ^{2}f}{\partial
t\partial s}\right\vert $ is a convex function on the co-ordinates on $%
\Delta ,$ then one has the inequalities:%
\begin{eqnarray}
&&\left\vert \frac{f(a,c)+f(a,d)+f(b,c)+f(b,d)}{4}\right.  \label{1.3} \\
&&\left. \frac{1}{(b-a)(d-c)}\int_{a}^{b}\int_{c}^{d}f(x,y)dxdy-A\right\vert
\notag \\
&\leq &\frac{(b-a)(d-c)}{16}\left( \frac{\left\vert \frac{\partial ^{2}f}{%
\partial t\partial s}\right\vert (a,c)+\left\vert \frac{\partial ^{2}f}{%
\partial t\partial s}\right\vert (a,d)+\left\vert \frac{\partial ^{2}f}{%
\partial t\partial s}\right\vert (b,c)+\left\vert \frac{\partial ^{2}f}{%
\partial t\partial s}\right\vert (b,d)}{4}\right)  \notag
\end{eqnarray}%
where%
\begin{equation*}
A=\frac{1}{2}\left[ \frac{1}{(b-a)}\int_{a}^{b}\left[ f(x,c)+f(x,d)\right]
dx+\frac{1}{(d-c)}\int_{c}^{d}\left[ f(a,y)dy+f(b,y)\right] dy\right] .
\end{equation*}
\end{theorem}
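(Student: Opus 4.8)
The plan is to reduce the whole estimate to a single integral representation and then bound it using the coordinate convexity hypothesis. First I would establish the identity
\[
\frac{f(a,c)+f(a,d)+f(b,c)+f(b,d)}{4}+\frac{1}{(b-a)(d-c)}\int_a^b\int_c^d f(x,y)\,dx\,dy-A
= \frac{(b-a)(d-c)}{4}\int_0^1\int_0^1 (1-2t)(1-2s)\,\frac{\partial^2 f}{\partial t\partial s}\bigl(ta+(1-t)b,\,sc+(1-s)d\bigr)\,dt\,ds .
\]
This is the heart of the argument and the step I expect to be the main obstacle. It is proved by integration by parts carried out twice: first in the variable $t$, pairing the derivative with the factor $(1-2t)$, and then in $s$ with the factor $(1-2s)$ (equivalently, a one-dimensional identity applied in each coordinate in turn). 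The boundary contributions from the $t$-integration generate the traces of $\partial f/\partial s$ along $x=a$ and $x=b$, and the subsequent $s$-integration, together with the change of variables $x=ta+(1-t)b$, $y=sc+(1-s)d$, reassembles the four corner values, the two families of single integrals that make up $A$, and the double integral of $f$. Tracking the Jacobian factors $(b-a)$, $(d-c)$ and all the signs through this double integration by parts is the delicate bookkeeping that makes this step the crux.

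Once the identity is in hand, I would take absolute values on both sides and move the modulus inside the double integral, obtaining
\[
\Bigl|\,\cdots\,\Bigr| \le \frac{(b-a)(d-c)}{4}\int_0^1\int_0^1 |1-2t|\,|1-2s|\,\Bigl|\frac{\partial^2 f}{\partial t\partial s}\bigl(ta+(1-t)b,\,sc+(1-s)d\bigr)\Bigr|\,dt\,ds,
\]
where the left-hand side abbreviates the expression already bounded. Next I would invoke the hypothesis that $\bigl|\partial^2 f/\partial t\partial s\bigr|$ is convex on the co-ordinates. Applying convexity first in the first variable and then in the second yields the four-point bound
\[
\Bigl|\tfrac{\partial^2 f}{\partial t\partial s}\bigl(ta+(1-t)b,\,sc+(1-s)d\bigr)\Bigr| \le ts\,M_{ac}+t(1-s)\,M_{ad}+(1-t)s\,M_{bc}+(1-t)(1-s)\,M_{bd},
\]
where $M_{ac}=\bigl|\partial^2 f/\partial t\partial s\bigr|(a,c)$ and similarly for the other three corners.

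Finally I would substitute this bound into the integral and evaluate the resulting elementary integrals. Because the two variables separate, each of the four terms factors into a product of one-dimensional integrals, and a direct computation gives $\int_0^1 |1-2t|\,t\,dt = \int_0^1 |1-2t|\,(1-t)\,dt = \tfrac14$. Hence each term contributes the factor $\tfrac14\cdot\tfrac14=\tfrac{1}{16}$, so the double integral equals $\tfrac{1}{16}\bigl(M_{ac}+M_{ad}+M_{bc}+M_{bd}\bigr)$. Multiplying by the prefactor $(b-a)(d-c)/4$ produces exactly $\frac{(b-a)(d-c)}{16}\cdot\frac{M_{ac}+M_{ad}+M_{bc}+M_{bd}}{4}$, which is the claimed right-hand side. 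These closing computations are routine; all the genuine content of the proof sits in the integration-by-parts identity of the first step.
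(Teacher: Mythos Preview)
Your proposal is correct and follows essentially the same route as the paper: the paper derives this inequality as the special case $\lambda=1$ of its Theorem~9, which rests on Lemma~1; your integration-by-parts identity is precisely Lemma~1 at $\lambda=1$ after the change of variables $x=ta+(1-t)b$, $y=sc+(1-s)d$ (so that $K(x)M(y)$ becomes $\tfrac{(b-a)(d-c)}{4}(1-2t)(1-2s)$), and the subsequent use of coordinate convexity and the evaluation $\int_0^1|1-2t|\,t\,dt=\tfrac14$ match the paper's computations exactly.
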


\begin{theorem}
Let $f:\Delta \subset 
\mathbb{R}
^{2}\rightarrow 
\mathbb{R}
$ be a partial differentiable mapping on $\Delta :=[a,b]\times \lbrack c,d]$
in $%
\mathbb{R}
^{2}$ with $a<b$ and $c<d.$ If $\left\vert \frac{\partial ^{2}f}{\partial
t\partial s}\right\vert ^{q},$ $q>1,$ is a convex function on the
co-ordinates on $\Delta ,$ then one has the inequalities:%
\begin{eqnarray}
&&\left\vert \frac{f(a,c)+f(a,d)+f(b,c)+f(b,d)}{4}\right.  \label{1.4} \\
&&\left. \frac{1}{(b-a)(d-c)}\int_{a}^{b}\int_{c}^{d}f(x,y)dxdy-A\right\vert
\notag \\
&\leq &\frac{(b-a)(d-c)}{4\left( p+1\right) ^{\frac{2}{p}}}\left( \frac{%
\left\vert \frac{\partial ^{2}f}{\partial t\partial s}\right\vert
^{q}(a,c)+\left\vert \frac{\partial ^{2}f}{\partial t\partial s}\right\vert
^{q}(a,d)+\left\vert \frac{\partial ^{2}f}{\partial t\partial s}\right\vert
^{q}(b,c)+\left\vert \frac{\partial ^{2}f}{\partial t\partial s}\right\vert
^{q}(b,d)}{4}\right) ^{\frac{1}{q}}  \notag
\end{eqnarray}%
where%
\begin{equation*}
A=\frac{1}{2}\left[ \frac{1}{(b-a)}\int_{a}^{b}\left[ f(x,c)+f(x,d)\right]
dx+\frac{1}{(d-c)}\int_{c}^{d}\left[ f(a,y)dy+f(b,y)\right] dy\right]
\end{equation*}%
and $\frac{1}{p}+\frac{1}{q}=1.$
\end{theorem}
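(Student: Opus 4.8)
The plan is to obtain (\ref{1.4}) from the same integral representation that yields (\ref{1.3}) in \cite{OZ2}, changing only the way the kernel is estimated. That representation reads
\begin{align*}
&\frac{f(a,c)+f(a,d)+f(b,c)+f(b,d)}{4}+\frac{1}{(b-a)(d-c)}\int_{a}^{b}\int_{c}^{d}f(x,y)\,dx\,dy-A\\
&\qquad=\frac{(b-a)(d-c)}{4}\int_{0}^{1}\int_{0}^{1}(1-2t)(1-2s)\,\frac{\partial ^{2}f}{\partial t\partial s}\bigl(ta+(1-t)b,\,sc+(1-s)d\bigr)\,dt\,ds.
\end{align*}
First I would take absolute values of both sides and move the modulus inside the double integral, so that the left-hand member of (\ref{1.4}) is dominated by
\begin{equation*}
\frac{(b-a)(d-c)}{4}\int_{0}^{1}\int_{0}^{1}|1-2t|\,|1-2s|\left|\frac{\partial ^{2}f}{\partial t\partial s}\bigl(ta+(1-t)b,\,sc+(1-s)d\bigr)\right|\,dt\,ds.
\end{equation*}

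The decisive step, and the only place the argument departs from the proof of (\ref{1.3}), is to apply H\"{o}lder's inequality to this double integral \emph{before} using convexity. Grouping the integrand as $\bigl(|1-2t|\,|1-2s|\bigr)$ times $\left|\partial ^{2}f/\partial t\partial s\right|$ and using the exponents $p,q$ with $\frac1p+\frac1q=1$, the integral is at most the product of
\begin{equation*}
\left(\int_{0}^{1}\int_{0}^{1}|1-2t|^{p}|1-2s|^{p}\,dt\,ds\right)^{1/p}\quad\text{and}\quad\left(\int_{0}^{1}\int_{0}^{1}\left|\frac{\partial ^{2}f}{\partial t\partial s}(\cdots)\right|^{q}dt\,ds\right)^{1/q}.
\end{equation*}
The first factor separates into a product of one-dimensional integrals; with the elementary evaluation $\int_{0}^{1}|1-2t|^{p}\,dt=\frac{1}{p+1}$ (by symmetry about $t=\tfrac12$) it equals $\bigl((p+1)^{-2}\bigr)^{1/p}=(p+1)^{-2/p}$, which is exactly the constant appearing in (\ref{1.4}).

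It then remains to bound the second factor, and here I would invoke that $\left|\partial ^{2}f/\partial t\partial s\right|^{q}$ is convex on the co-ordinates. Writing $M_{xy}:=\left|\partial ^{2}f/\partial t\partial s\right|^{q}(x,y)$ for the four corners, the defining inequality applied successively in each variable gives
\begin{equation*}
\left|\frac{\partial ^{2}f}{\partial t\partial s}\right|^{q}\!\bigl(ta+(1-t)b,\,sc+(1-s)d\bigr)\leq ts\,M_{ac}+t(1-s)M_{ad}+(1-t)s\,M_{bc}+(1-t)(1-s)M_{bd}.
\end{equation*}
Integrating over $[0,1]^{2}$ and using $\int_{0}^{1}t\,dt=\int_{0}^{1}(1-t)\,dt=\tfrac12$, each of the four products contributes $\tfrac14$, so the second factor equals $\left(\frac{M_{ac}+M_{ad}+M_{bc}+M_{bd}}{4}\right)^{1/q}$. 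Multiplying the three pieces $\frac{(b-a)(d-c)}{4}$, $(p+1)^{-2/p}$ and this last factor reproduces the right-hand side of (\ref{1.4}) verbatim.

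The work is bookkeeping rather than conceptual, so the main obstacle is to be careful at two points. First, H\"{o}lder must be applied with the full weight $|1-2t|\,|1-2s|$ placed in the $p$-factor, and the symmetric integral $\int_{0}^{1}|1-2t|^{p}\,dt$ must be evaluated correctly; any other grouping changes the constant. Second, convexity on the co-ordinates has to be used directly on $\left|\partial ^{2}f/\partial t\partial s\right|^{q}$ as a single function, not on $\left|\partial ^{2}f/\partial t\partial s\right|$ followed by raising to the power $q$, since the latter route would force an extra power-mean estimate and would not deliver the factor $(p+1)^{-2/p}$.
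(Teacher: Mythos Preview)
Your proposal is correct and follows essentially the same route as the paper. In this paper the inequality (\ref{1.4}) is not proved from scratch but recovered as the special case $\lambda=1$ of Theorem~6, whose proof uses Lemma~1 (which for $\lambda=1$ reduces exactly to your $(1-2t)(1-2s)$ identity after the change of variables $x=ta+(1-t)b$, $y=sc+(1-s)d$), then applies H\"older to separate $\bigl(\int\!\!\int|K(x)M(y)|^{p}\bigr)^{1/p}$ from $\bigl(\int\!\!\int|\partial^{2}f/\partial x\partial y|^{q}\bigr)^{1/q}$, and finally bounds the second factor via co-ordinate convexity of $|\partial^{2}f/\partial x\partial y|^{q}$ --- precisely your three steps. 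The only difference is cosmetic: the paper carries the general kernel $K(x)M(y)$ with a parameter $\lambda$ throughout and specializes at the end, whereas you work directly with the $\lambda=1$ kernel $(1-2t)(1-2s)$.
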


\begin{theorem}
Let $f:\Delta \subset 
\mathbb{R}
^{2}\rightarrow 
\mathbb{R}
$ be a partial differentiable mapping on $\Delta :=[a,b]\times \lbrack c,d]$
in $%
\mathbb{R}
^{2}$ with $a<b$ and $c<d.$ If $\left\vert \frac{\partial ^{2}f}{\partial
t\partial s}\right\vert ^{q},$ $q\geq 1,$ is a convex function on the
co-ordinates on $\Delta ,$ then one has the inequalities:%
\begin{eqnarray}
&&\left\vert \frac{f(a,c)+f(a,d)+f(b,c)+f(b,d)}{4}\right.  \label{1.5} \\
&&\left. \frac{1}{(b-a)(d-c)}\int_{a}^{b}\int_{c}^{d}f(x,y)dxdy-A\right\vert
\notag \\
&\leq &\frac{(b-a)(d-c)}{16}\left( \frac{\left\vert \frac{\partial ^{2}f}{%
\partial t\partial s}\right\vert ^{q}(a,c)+\left\vert \frac{\partial ^{2}f}{%
\partial t\partial s}\right\vert ^{q}(a,d)+\left\vert \frac{\partial ^{2}f}{%
\partial t\partial s}\right\vert ^{q}(b,c)+\left\vert \frac{\partial ^{2}f}{%
\partial t\partial s}\right\vert ^{q}(b,d)}{4}\right) ^{\frac{1}{q}}  \notag
\end{eqnarray}%
where%
\begin{equation*}
A=\frac{1}{2}\left[ \frac{1}{(b-a)}\int_{a}^{b}\left[ f(x,c)+f(x,d)\right]
dx+\frac{1}{(d-c)}\int_{c}^{d}\left[ f(a,y)dy+f(b,y)\right] dy\right] .
\end{equation*}
\end{theorem}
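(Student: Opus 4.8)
The plan is to run the same machinery that produces Theorems 1.3 and 1.4, inserting the power mean inequality as the one extra ingredient. The foundation is the two–dimensional integral identity that represents the bracketed quantity as a weighted double integral of the mixed partial. Writing
\[
S=\frac{f(a,c)+f(a,d)+f(b,c)+f(b,d)}{4}+\frac{1}{(b-a)(d-c)}\int_a^b\int_c^d f(x,y)\,dx\,dy-A,
\]
the identity I would invoke (obtained by substituting $x=ta+(1-t)b$, $y=sc+(1-s)d$ and integrating by parts once in each variable, then either citing it from \cite{OZ2} or re-deriving it at the outset) reads
\[
S=\frac{(b-a)(d-c)}{4}\int_0^1\int_0^1(1-2t)(1-2s)\,\frac{\partial^2 f}{\partial t\partial s}\bigl(ta+(1-t)b,\,sc+(1-s)d\bigr)\,dt\,ds .
\]
This is precisely the engine that yields the constant $\tfrac{(b-a)(d-c)}{16}$ in Theorem 1.3, which is a useful consistency anchor for the computation below.

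First I would pass to absolute values and carry the modulus inside the integral, obtaining
\[
|S|\le\frac{(b-a)(d-c)}{4}\int_0^1\int_0^1|1-2t|\,|1-2s|\,\left|\frac{\partial^2 f}{\partial t\partial s}\bigl(ta+(1-t)b,\,sc+(1-s)d\bigr)\right|\,dt\,ds .
\]
The new step, relative to the convexity-only argument of Theorem 1.3, is the power mean inequality applied to the non-negative product weight $w(t,s)=|1-2t|\,|1-2s|$. Since $\int_0^1|1-2t|\,dt=\tfrac12$, the total mass is $\int_0^1\int_0^1 w\,dt\,ds=\tfrac14$, and for $q\ge1$ the weighted power mean bound (equivalently Jensen for $x\mapsto x^q$ against the probability measure $4w\,dt\,ds$) gives
\[
\int_0^1\int_0^1 w\,\left|\tfrac{\partial^2 f}{\partial t\partial s}\right|\,dt\,ds\le\left(\tfrac14\right)^{1-\frac1q}\left(\int_0^1\int_0^1 w\,\left|\tfrac{\partial^2 f}{\partial t\partial s}\right|^{q}\,dt\,ds\right)^{\frac1q}.
\]

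Next I would use that $\left|\tfrac{\partial^2 f}{\partial t\partial s}\right|^{q}$ is convex on the co-ordinates, which (applying convexity in each variable in turn) yields the bilinear upper bound
\[
\left|\tfrac{\partial^2 f}{\partial t\partial s}\right|^q\bigl(ta+(1-t)b,sc+(1-s)d\bigr)\le ts\,M_{ac}+t(1-s)M_{ad}+(1-t)s\,M_{bc}+(1-t)(1-s)M_{bd},
\]
where $M_{xy}=\left|\tfrac{\partial^2 f}{\partial t\partial s}\right|^q(x,y)$. Integrating this against $w$ requires only the elementary moments $\int_0^1|1-2t|\,t\,dt=\int_0^1|1-2t|(1-t)\,dt=\tfrac14$, so each of the four terms contributes $\tfrac14\cdot\tfrac14\,M_{xy}=\tfrac1{16}M_{xy}$ and the inner $q$-th power integral is at most $\tfrac1{16}\bigl(M_{ac}+M_{ad}+M_{bc}+M_{bd}\bigr)$.

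Finally I would collect the constants. Substituting the two bounds back and using $\left(\tfrac14\right)^{1-1/q}\left(\tfrac1{16}\right)^{1/q}=4^{-1-1/q}$, the prefactor $\tfrac{(b-a)(d-c)}{4}\cdot 4^{-1-1/q}$ absorbs one factor $4^{-1/q}$ into the averaged sum and reproduces exactly $\tfrac{(b-a)(d-c)}{16}\bigl(\tfrac{M_{ac}+M_{ad}+M_{bc}+M_{bd}}{4}\bigr)^{1/q}$, which is the claimed estimate. I expect the main obstacle to be purely bookkeeping rather than conceptual: one must track the mass factor $\left(\tfrac14\right)^{1-1/q}$ from the power mean step together with the $\left(\tfrac1{16}\right)^{1/q}$ from the convexity step and verify they fuse into the clean constant; setting $q=1$ and recovering Theorem 1.3 provides a clean check that the exponent arithmetic has been done correctly.
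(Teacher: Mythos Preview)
Your proof is correct and follows essentially the same approach as the paper: an integral identity expressing $S$ through the mixed partial with kernel $(1-2t)(1-2s)$, then the power-mean inequality against that weight, then co-ordinate convexity of $\bigl|\partial^2 f/\partial t\partial s\bigr|^q$, and finally the same moment computations to assemble the constant $\tfrac{(b-a)(d-c)}{16}$. The only structural difference is that the paper proves a one-parameter family of such identities (Lemma~1) and inequalities (Theorem~7) depending on $\lambda\in[0,1]$ and then recovers the stated result by specializing to $\lambda=1$, whereas you work directly in the $\lambda=1$ case from the start.
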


In \cite{OZ3}, authors proved following inequalities for co-ordinated convex
functions.

\begin{theorem}
Let $f:\Delta =\left[ a,b\right] \times \left[ c,d\right] \rightarrow 
\mathbb{R}
$ be a partial differentiable mapping on $\Delta =\left[ a,b\right] \times %
\left[ c,d\right] .$ If $\left\vert \frac{\partial ^{2}f}{\partial t\partial
s}\right\vert $ is a convex function on the co-ordinates on $\Delta ,$ then
the following inequality holds;%
\begin{eqnarray}
&&\left\vert f\left( \frac{a+b}{2},\frac{c+d}{2}\right) \right.  \label{1.6}
\\
&&-\frac{1}{\left( d-c\right) }\int_{c}^{d}f\left( \frac{a+b}{2},y\right) dy-%
\frac{1}{\left( b-a\right) }\int_{a}^{b}f\left( x,\frac{c+d}{2}\right) dx 
\notag \\
&&\left. +\frac{1}{\left( b-a\right) \left( d-c\right) }\int_{a}^{b}%
\int_{c}^{d}f\left( x,y\right) dydx\right\vert  \notag \\
&\leq &\frac{\left( b-a\right) \left( d-c\right) }{64}\left[ \left\vert 
\frac{\partial ^{2}f}{\partial t\partial s}\left( a,c\right) \right\vert
+\left\vert \frac{\partial ^{2}f}{\partial t\partial s}\left( b,c\right)
\right\vert +\left\vert \frac{\partial ^{2}f}{\partial t\partial s}\left(
a,d\right) \right\vert +\left\vert \frac{\partial ^{2}f}{\partial t\partial s%
}\left( b,d\right) \right\vert \right] .  \notag
\end{eqnarray}
\end{theorem}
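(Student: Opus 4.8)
The plan is to reduce the estimate \eqref{1.6} to a single two–dimensional integral identity and then to control that integral by means of the co-ordinated convexity hypothesis. The starting observation is that the expression inside the absolute value in \eqref{1.6} is the ``tensor product'' of the one–variable midpoint functional $g\mapsto g\!\left(\frac{a+b}{2}\right)-\frac{1}{b-a}\int_a^b g(x)\,dx$ applied in the first variable with its analogue in the second variable; this already indicates that it should be representable through the mixed derivative $\frac{\partial ^2 f}{\partial t\partial s}$. Accordingly, the first and principal step is to prove the identity
\begin{eqnarray*}
&&f\!\left(\frac{a+b}{2},\frac{c+d}{2}\right)-\frac{1}{d-c}\int_c^d f\!\left(\frac{a+b}{2},y\right)dy-\frac{1}{b-a}\int_a^b f\!\left(x,\frac{c+d}{2}\right)dx\\
&&\quad+\frac{1}{(b-a)(d-c)}\int_a^b\int_c^d f(x,y)\,dy\,dx\\
&&=(b-a)(d-c)\int_0^1\!\int_0^1 p(t)\,p(s)\,\frac{\partial ^2 f}{\partial t\partial s}\big(ta+(1-t)b,\,sc+(1-s)d\big)\,ds\,dt,
\end{eqnarray*}
where $p$ is the piecewise–linear kernel
\begin{equation*}
p(\tau )=\left\{ \begin{array}{ll}\tau , & \tau \in \left[ 0,\frac12\right] ,\\ \tau -1, & \tau \in \left( \frac12,1\right] .\end{array}\right.
\end{equation*}

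I would establish this identity by integrating by parts one variable at a time. Fixing $s$ and setting $h(t)=\frac{\partial f}{\partial s}\big(ta+(1-t)b,\,sc+(1-s)d\big)$, I would write the inner $t$–integral as $\int_0^1 p(t)\,h'(t)\,dt$ up to the chain–rule factor $-(b-a)$, split it over the two subintervals $\left[0,\frac12\right]$ and $\left(\frac12,1\right]$ on which $p$ is smooth, and integrate by parts on each. Since $p(0)=p(1)=0$, the outer boundary contributions vanish, while the unit jump of $p$ at $\tau =\frac12$ produces precisely the midpoint value; the remaining term $-\int_0^1 h$, after the substitution $u=ta+(1-t)b$, becomes the average $-\frac{1}{b-a}\int_a^b$. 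Thus the $t$–integration reproduces the one–variable midpoint functional applied to $\frac{\partial f}{\partial s}$, and repeating the identical argument in $s$ yields the four terms on the left–hand side with exactly the signs shown. The delicate point here is purely bookkeeping: one must decompose the square $[0,1]^2$ into the four subrectangles determined by $t,s\lessgtr \frac12$ and keep track of every boundary term so that the midpoint values and the three averages assemble with the correct signs.

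With the identity in hand, the remainder is routine. Taking absolute values, using $|p(t)p(s)|=|p(t)|\,|p(s)|$ and the triangle inequality, and then invoking the co-ordinated convexity of $\left\vert \frac{\partial ^2 f}{\partial t\partial s}\right\vert $ in the form
\begin{eqnarray*}
&&\left\vert \frac{\partial ^2 f}{\partial t\partial s}\big(ta+(1-t)b,\,sc+(1-s)d\big)\right\vert \\
&&\quad\leq ts\left\vert \frac{\partial ^2 f}{\partial t\partial s}(a,c)\right\vert +t(1-s)\left\vert \frac{\partial ^2 f}{\partial t\partial s}(a,d)\right\vert \\
&&\qquad+(1-t)s\left\vert \frac{\partial ^2 f}{\partial t\partial s}(b,c)\right\vert +(1-t)(1-s)\left\vert \frac{\partial ^2 f}{\partial t\partial s}(b,d)\right\vert ,
\end{eqnarray*}
the double integral splits into products of one–dimensional integrals. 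Each of the four corner coefficients has the form $\left( \int_0^1 |p(t)|\,t\,dt\right) \left( \int_0^1 |p(s)|\,s\,ds\right) $, with $t$ or $1-t$, and $s$ or $1-s$, as appropriate, and a direct computation gives $\int_0^1 |p(\tau )|\,\tau \,d\tau =\int_0^1 |p(\tau )|\,(1-\tau )\,d\tau =\frac18$. Hence every corner acquires the factor $\frac18\cdot \frac18=\frac{1}{64}$, and after multiplying by $(b-a)(d-c)$ the right–hand side of \eqref{1.6} emerges exactly. I expect the only genuine obstacle to lie in the derivation of the integral identity of the first step; once it is secured, all the subsequent estimates are elementary.
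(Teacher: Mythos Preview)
Your argument is correct and follows essentially the same route as the paper: an integration-by-parts identity expressing the left side through the mixed partial against a piecewise-linear product kernel, followed by the co-ordinated convexity bound and elementary evaluation of the resulting one-dimensional integrals. The only difference is one of packaging: the paper establishes a $\lambda$-parametrized identity (Lemma~1) with kernels $K(x),M(y)$ and a corresponding general inequality, and then recovers \eqref{1.6} by setting $\lambda=0$, whereas you prove the $\lambda=0$ case directly; your kernel $p$ is exactly the paper's $K$ (resp.\ $M$) at $\lambda=0$ after the affine change of variables.
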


\begin{theorem}
Let $f:\Delta =\left[ a,b\right] \times \left[ c,d\right] \rightarrow 
\mathbb{R}
$ be a partial differentiable mapping on $\Delta =\left[ a,b\right] \times %
\left[ c,d\right] .$ If $\left\vert \frac{\partial ^{2}f}{\partial t\partial
s}\right\vert ^{q},$ $q>1,$ is a convex function on the co-ordinates on $%
\Delta ,$ then the following inequality holds;%
\begin{eqnarray}
&&\left\vert f\left( \frac{a+b}{2},\frac{c+d}{2}\right) +\frac{1}{\left(
b-a\right) \left( d-c\right) }\int_{a}^{b}\int_{c}^{d}f\left( x,y\right)
dydx\right.  \label{1.7} \\
&&\left. -\frac{1}{\left( d-c\right) }\int_{c}^{d}f\left( \frac{a+b}{2}%
,y\right) dy-\frac{1}{\left( b-a\right) }\int_{a}^{b}f\left( x,\frac{c+d}{2}%
\right) dx\right\vert  \notag \\
&\leq &\frac{\left( b-a\right) \left( d-c\right) }{4\left( p+1\right) ^{%
\frac{2}{p}}}  \notag \\
&&\times \left( \frac{\left\vert \frac{\partial ^{2}f}{\partial t\partial s}%
\left( a,c\right) \right\vert ^{q}+\left\vert \frac{\partial ^{2}f}{\partial
t\partial s}\left( b,c\right) \right\vert ^{q}+\left\vert \frac{\partial
^{2}f}{\partial t\partial s}\left( a,d\right) \right\vert ^{q}+\left\vert 
\frac{\partial ^{2}f}{\partial t\partial s}\left( b,d\right) \right\vert ^{q}%
}{4}\right) ^{\frac{1}{q}}.  \notag
\end{eqnarray}
\end{theorem}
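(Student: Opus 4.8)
The plan is to convert the left-hand side into a weighted double integral of the mixed partial derivative and then estimate it by Hölder's inequality. The starting point is the integral identity that already powers the preceding theorem, inequality \eqref{1.6}: with the midpoint kernel
\[
p(t)=\begin{cases} t, & 0\le t\le \tfrac12,\\ t-1, & \tfrac12 < t\le 1,\end{cases}
\]
one has
\[
f\left(\frac{a+b}{2},\frac{c+d}{2}\right)+\frac{1}{(b-a)(d-c)}\int_a^b\int_c^d f(x,y)\,dy\,dx-\frac{1}{d-c}\int_c^d f\left(\frac{a+b}{2},y\right)dy-\frac{1}{b-a}\int_a^b f\left(x,\frac{c+d}{2}\right)dx
\]
\[
=(b-a)(d-c)\int_0^1\int_0^1 p(t)\,p(s)\,\frac{\partial^2 f}{\partial t\partial s}\big((1-t)a+tb,(1-s)c+sd\big)\,dt\,ds.
\]
This identity is the tensor product of the one-dimensional midpoint identity $f(\frac{a+b}{2})-\frac{1}{b-a}\int_a^b f=(b-a)\int_0^1 p(t)f'((1-t)a+tb)\,dt$ applied successively in each variable, and it is verified by integration by parts, splitting each of the $t$- and $s$-integrals at $\frac12$. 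I would record it as a lemma and establish it first.

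Next I would take absolute values inside the double integral and apply Hölder's inequality with exponents $p$ and $q$, which factors the estimate as
\[
\le (b-a)(d-c)\left(\int_0^1\int_0^1 |p(t)|^p|p(s)|^p\,dt\,ds\right)^{1/p}\left(\int_0^1\int_0^1 \left|\frac{\partial^2 f}{\partial t\partial s}\big((1-t)a+tb,(1-s)c+sd\big)\right|^q dt\,ds\right)^{1/q}.
\]
The first factor separates into the square of $\int_0^1 |p(t)|^p\,dt=2\int_0^{1/2}t^p\,dt=\frac{1}{(p+1)2^p}$, and raising this quantity to the power $2/p$ produces exactly the constant $\frac{1}{4(p+1)^{2/p}}$ appearing in the statement.

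For the second factor I would invoke the hypothesis that $\left|\frac{\partial^2 f}{\partial t\partial s}\right|^q$ is convex on the co-ordinates: applying convexity first in the first slot and then in the second slot bounds the integrand by $(1-t)(1-s)\,|g(a,c)|^q+(1-t)s\,|g(a,d)|^q+t(1-s)\,|g(b,c)|^q+ts\,|g(b,d)|^q$, where $g=\frac{\partial^2 f}{\partial t\partial s}$. Each of these four coefficients integrates to $\int_0^1\int_0^1(1-t)(1-s)\,dt\,ds=\frac14$ over the unit square, so the second factor reduces to the four-point average raised to the power $1/q$, and multiplying the two factors yields the claimed bound. The only genuinely delicate step is establishing the identity with the correct kernel $p(t)p(s)$; once that is in hand, the Hölder split and the co-ordinated-convexity estimate are routine, and the two elementary integrals $\int_0^1|p(t)|^p\,dt$ and $\int_0^1\int_0^1 ts\,dt\,ds$ deliver the two constants $\frac{1}{4(p+1)^{2/p}}$ and $\frac14$ respectively.
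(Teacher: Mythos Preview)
Your argument is correct and coincides with the paper's own derivation: the paper obtains \eqref{1.7} by specialising its parametric identity (Lemma~1) and the ensuing H\"older-based theorem to $\lambda=0$, and at $\lambda=0$ the kernels $K(x),M(y)$ reduce precisely to your midpoint kernel $p(\cdot)$ after the affine change of variables. The H\"older split, the evaluation $\int_0^1|p(t)|^p\,dt=\frac{1}{(p+1)2^p}$, and the co-ordinated-convexity bound leading to the factor $\tfrac14$ are exactly the steps the paper carries out (in the $[a,b]\times[c,d]$ variables rather than on the unit square).
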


\begin{theorem}
Let $f:\Delta =\left[ a,b\right] \times \left[ c,d\right] \rightarrow 
\mathbb{R}
$ be a partial differentiable mapping on $\Delta =\left[ a,b\right] \times %
\left[ c,d\right] .$ If $\left\vert \frac{\partial ^{2}f}{\partial t\partial
s}\right\vert ^{q},$ $q\geq 1,$ is a convex function on the co-ordinates on $%
\Delta ,$ then the following inequality holds;%
\begin{eqnarray}
&&\left\vert f\left( \frac{a+b}{2},\frac{c+d}{2}\right) +\frac{1}{\left(
b-a\right) \left( d-c\right) }\int_{a}^{b}\int_{c}^{d}f\left( x,y\right)
dydx\right.  \label{1.8} \\
&&\left. -\frac{1}{\left( d-c\right) }\int_{c}^{d}f\left( \frac{a+b}{2}%
,y\right) dy-\frac{1}{\left( b-a\right) }\int_{a}^{b}f\left( x,\frac{c+d}{2}%
\right) dx\right\vert  \notag \\
&\leq &\frac{\left( b-a\right) \left( d-c\right) }{16}  \notag \\
&&\times \left( \frac{\left\vert \frac{\partial ^{2}f}{\partial t\partial s}%
\left( a,c\right) \right\vert ^{q}+\left\vert \frac{\partial ^{2}f}{\partial
t\partial s}\left( b,c\right) \right\vert ^{q}+\left\vert \frac{\partial
^{2}f}{\partial t\partial s}\left( a,d\right) \right\vert ^{q}+\left\vert 
\frac{\partial ^{2}f}{\partial t\partial s}\left( b,d\right) \right\vert ^{q}%
}{4}\right) ^{\frac{1}{q}}.  \notag
\end{eqnarray}
\end{theorem}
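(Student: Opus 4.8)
The plan is to run exactly the same machinery that produces the $q=1$ estimate, replacing the final crude bounding step by the power mean inequality. The whole argument rests on a single integral identity. Writing the quantity to be estimated as the tensor product of the one-dimensional midpoint defects in $x$ and in $y$, I would first establish (or cite from \cite{OZ3}) that
\begin{align*}
M &:= f\left(\frac{a+b}{2},\frac{c+d}{2}\right)+\frac{1}{(b-a)(d-c)}\int_a^b\int_c^d f(x,y)\,dy\,dx\\
&\quad -\frac{1}{d-c}\int_c^d f\left(\frac{a+b}{2},y\right)dy-\frac{1}{b-a}\int_a^b f\left(x,\frac{c+d}{2}\right)dx\\
&=(b-a)(d-c)\int_0^1\int_0^1 p(t)\,p(s)\,\frac{\partial^2 f}{\partial t\partial s}\left(ta+(1-t)b,\,sc+(1-s)d\right)\,ds\,dt,
\end{align*}
where
\[
p(\tau)=\begin{cases} \tau, & \tau\in\left[0,\tfrac{1}{2}\right],\\ \tau-1, & \tau\in\left(\tfrac{1}{2},1\right].\end{cases}
\]
This identity follows by applying the one-variable Kirmaci-type representation (integration by parts against $p$, carefully accounting for the unit jump of $p$ at $\tau=\tfrac12$) successively in each coordinate; it is the common engine behind the $q=1$ and the $q>1$ (H\"{o}lder) versions recorded above.

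Next I would take absolute values, move them inside the double integral, and apply the power mean inequality to the weighted integral with weight $|p(t)|\,|p(s)|$. For $q\ge 1$ this gives
\begin{align*}
\left|M\right| &\le (b-a)(d-c)\left(\int_0^1\int_0^1 |p(t)||p(s)|\,ds\,dt\right)^{1-\frac1q}\\
&\quad\times\left(\int_0^1\int_0^1 |p(t)||p(s)|\left|\frac{\partial^2 f}{\partial t\partial s}\left(ta+(1-t)b,\,sc+(1-s)d\right)\right|^q ds\,dt\right)^{\frac1q}.
\end{align*}
The elementary computation $\int_0^1|p(\tau)|\,d\tau=\tfrac18+\tfrac18=\tfrac14$ shows that the total mass of the weight is $\left(\tfrac14\right)^2=\tfrac{1}{16}$, so the first factor equals $\left(\tfrac{1}{16}\right)^{1-1/q}$.

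For the second factor I would invoke the hypothesis that $\left|\frac{\partial^2 f}{\partial t\partial s}\right|^q$ is convex on the co-ordinates: evaluated at the convex combination $\left(ta+(1-t)b,\,sc+(1-s)d\right)$ it yields
\[
\left|\frac{\partial^2 f}{\partial t\partial s}\left(ta+(1-t)b,\,sc+(1-s)d\right)\right|^q \le ts\,A_{ac}+t(1-s)A_{ad}+(1-t)s\,A_{bc}+(1-t)(1-s)A_{bd},
\]
where $A_{uv}=\left|\frac{\partial^2 f}{\partial t\partial s}(u,v)\right|^q$. Using the symmetry $|p(\tau)|=|p(1-\tau)|$ one computes $\int_0^1|p(\tau)|\tau\,d\tau=\int_0^1|p(\tau)|(1-\tau)\,d\tau=\tfrac18$, so integrating this convexity estimate against $|p(t)||p(s)|$ makes every one of the four product moments equal to $\tfrac18\cdot\tfrac18=\tfrac1{64}$. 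Hence the second factor is at most $\left(\tfrac1{64}S\right)^{1/q}=\left(\tfrac1{16}\cdot\tfrac S4\right)^{1/q}$, with $S=A_{ac}+A_{ad}+A_{bc}+A_{bd}$. The two factors then telescope,
\[
\left(\frac{1}{16}\right)^{1-\frac1q}\left(\frac{1}{16}\cdot\frac{S}{4}\right)^{\frac1q}=\frac{1}{16}\left(\frac{S}{4}\right)^{\frac1q},
\]
reproducing exactly the claimed factor $\tfrac{(b-a)(d-c)}{16}$ times the averaged $q$-th powers raised to $1/q$.

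The three scalar integrals ($\int_0^1|p(\tau)|\,d\tau=\tfrac14$, $\int_0^1|p(\tau)|\tau\,d\tau=\tfrac18$, and the jump bookkeeping inside the identity) are routine. The one step that needs genuine care --- the main obstacle --- is the correct two-dimensional use of the power mean inequality: it must be applied once, to the single product weight $|p(t)||p(s)|$ over the unit square (rather than iterated slot by slot), so that the exponent $1-\tfrac1q$ falls on the total mass $\tfrac1{16}$ while $\tfrac1q$ falls on the convexity estimate; only then do the two powers of $\tfrac1{16}$ recombine into the clean factor $\tfrac1{16}$. As a final check I would set $q=1$, which collapses the bound to $\tfrac{(b-a)(d-c)}{16}\cdot\tfrac S4=\tfrac{(b-a)(d-c)}{64}S$, in agreement with the $q=1$ statement.
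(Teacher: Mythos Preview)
Your proposal is correct and follows essentially the same route as the paper: the paper establishes a $\lambda$-parametrized identity (Lemma~1) with kernel $K(x)M(y)$, applies the power mean inequality once to the product weight $|K(x)M(y)|$, invokes co-ordinate convexity of $\left|\frac{\partial^2 f}{\partial x\partial y}\right|^q$, computes the resulting moments, and then recovers (\ref{1.8}) by setting $\lambda=0$. Your kernel $p(t)p(s)$ is exactly the $\lambda=0$ specialization of $K(x)M(y)$ after the change of variables, and your subsequent steps (power mean on the full double integral, the convexity bound, and the moment values $\tfrac14$ and $\tfrac18$) coincide with the paper's computations specialized to $\lambda=0$.
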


In \cite{OZ4}, \"{O}zdemir \textit{et al. }proved the following Theorem
which is involving an inequality of Simpson's type;

\begin{theorem}
Let $f:\Delta \subset 
\mathbb{R}
^{2}\rightarrow 
\mathbb{R}
$ be a partial differentiable mapping on $\Delta =\left[ a,b\right] \times %
\left[ c,d\right] .$ If $\frac{\partial ^{2}f}{\partial t\partial s}$ is a
convex function on the co-ordinates on $\Delta ,$ then the following
inequality holds:%
\begin{eqnarray}
&&\left\vert \frac{f\left( a,\frac{c+d}{2}\right) +f\left( b,\frac{c+d}{2}%
\right) +4f\left( \frac{a+b}{2},\frac{c+d}{2}\right) +f\left( \frac{a+b}{2}%
,c\right) +f\left( \frac{a+b}{2},d\right) }{9}\right.  \notag \\
&&+\frac{f\left( a,c\right) +f\left( b,c\right) +f\left( a,d\right) +f\left(
b,d\right) }{36}\left. +\frac{1}{\left( b-a\right) \left( d-c\right) }%
\int_{a}^{b}\int_{c}^{d}f\left( x,y\right) dydx-A\right\vert  \notag \\
&\leq &\frac{25\left( b-a\right) \left( d-c\right) }{72}  \label{1.9} \\
&&\times \left( \frac{\left\vert \frac{\partial ^{2}f}{\partial t\partial s}%
\left( a,c\right) \right\vert +\left\vert \frac{\partial ^{2}f}{\partial
t\partial s}\left( a,d\right) \right\vert +\left\vert \frac{\partial ^{2}f}{%
\partial t\partial s}\left( b,c\right) \right\vert +\left\vert \frac{%
\partial ^{2}f}{\partial t\partial s}\left( b,d\right) \right\vert }{72}%
\right)  \notag
\end{eqnarray}%
where%
\begin{eqnarray*}
A &=&\frac{1}{6\left( b-a\right) }\int_{a}^{b}\left[ f\left( x,c\right)
+4f\left( x,\frac{c+d}{2}\right) +f\left( x,d\right) \right] dx \\
&&+\frac{1}{6\left( d-c\right) }\int_{c}^{d}\left[ f\left( a,y\right)
+4f\left( \frac{a+b}{2},y\right) +f\left( b,y\right) \right] dy.
\end{eqnarray*}
\end{theorem}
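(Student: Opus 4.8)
The plan is to reduce inequality \eqref{1.9} to a single integral identity in which the mixed partial derivative appears against a fixed Simpson kernel, and then to estimate that identity using the co-ordinated convexity of $\left|\frac{\partial^2 f}{\partial t\partial s}\right|$. The organizing observation is that the quantity inside the absolute value in \eqref{1.9} is exactly the tensor product of the one-dimensional Simpson error functionals in the two variables. Writing $\mathcal{S}_x[g]=\frac16\left[g(a)+4g\!\left(\frac{a+b}{2}\right)+g(b)\right]$ and $\mathcal{I}_x[g]=\frac{1}{b-a}\int_a^b g$, with analogous operators in $y$, a purely algebraic expansion shows that $(\mathcal{S}_x-\mathcal{I}_x)(\mathcal{S}_y-\mathcal{I}_y)[f]$ equals the nine-point Simpson sum (the first two bracketed groups in \eqref{1.9}) plus the double integral minus $A$. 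The weights $\frac{1}{36},\frac19,\frac49$ are precisely the products of the one-dimensional weights $\frac16,\frac46,\frac16$, so this identification is the natural first step.

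Next I would establish the one-dimensional kernel identity. Define
$$
p(t)=\begin{cases} t-\frac16, & t\in\left[0,\frac12\right),\\ t-\frac56, & t\in\left[\frac12,1\right].\end{cases}
$$
Integration by parts on each of the two subintervals, collecting the boundary contributions at $0,\frac12,1$, gives $\int_0^1 p(t)\,\tilde g'(t)\,dt=\frac16\left[\tilde g(0)+4\tilde g\!\left(\frac12\right)+\tilde g(1)\right]-\int_0^1\tilde g(t)\,dt$ for any $\tilde g$ with absolutely continuous first derivative. Substituting $x=(1-t)a+tb$ turns this into $(\mathcal{S}_x-\mathcal{I}_x)[g]=(b-a)\int_0^1 p(t)\,g'((1-t)a+tb)\,dt$. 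Applying the same representation in the $y$-variable and using Fubini, the tensorised identity becomes
$$
(\mathcal{S}_x-\mathcal{I}_x)(\mathcal{S}_y-\mathcal{I}_y)[f]=(b-a)(d-c)\int_0^1\!\!\int_0^1 p(t)\,p(s)\,\frac{\partial^2 f}{\partial t\partial s}\big((1-t)a+tb,(1-s)c+sd\big)\,ds\,dt .
$$
This is the key lemma; everything downstream is estimation.

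From here I would take absolute values and pass them inside the double integral, obtaining the bound $(b-a)(d-c)\int_0^1\int_0^1|p(t)|\,|p(s)|\,\left|\frac{\partial^2 f}{\partial t\partial s}\big((1-t)a+tb,(1-s)c+sd\big)\right|\,ds\,dt$. Applying the co-ordinated convexity of $\left|\frac{\partial^2 f}{\partial t\partial s}\right|$ in each variable replaces the last factor by the sum of the four weights $(1-t)(1-s),\,(1-t)s,\,t(1-s),\,ts$ multiplied respectively by $\left|\frac{\partial^2 f}{\partial t\partial s}\right|$ at $(a,c),(a,d),(b,c),(b,d)$, after which the double integral splits into products of one-variable moments of $|p|$. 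The remaining task is to evaluate $\int_0^1|p(t)|\,t\,dt$ and $\int_0^1|p(t)|(1-t)\,dt$; since $|p|$ is symmetric about $t=\frac12$ these two moments are equal, each being half of $\int_0^1|p(t)|\,dt=\frac{5}{36}$, hence equal to $\frac{5}{72}$. Every one of the four corner terms therefore acquires the same coefficient $(b-a)(d-c)\left(\frac{5}{72}\right)^2=\frac{25(b-a)(d-c)}{72\cdot72}$, and summing the four contributions reproduces exactly the right-hand side of \eqref{1.9}.

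The main obstacle is the careful derivation of the kernel identity: one must handle the piecewise definition of $p$, track its jump at $t=\frac12$ so that the boundary terms assemble into exactly the Simpson weights $\frac16,\frac46,\frac16$ rather than a perturbed combination, and then honestly integrate $|p|$ against $t$ across the two sign changes of $p$ at $t=\frac16$ and $t=\frac56$. The rest is routine bookkeeping, with the symmetry of $|p|$ doing most of the work of keeping the four corner coefficients equal. One point worth flagging is that the estimation genuinely needs $\left|\frac{\partial^2 f}{\partial t\partial s}\right|$ to be co-ordinated convex, as in the companion theorems, which is the hypothesis under which the corner-value bound on the right of \eqref{1.9} is meaningful.
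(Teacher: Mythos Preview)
Your argument is correct and is essentially the paper's own proof specialized from the outset to $\lambda=\tfrac13$: the paper establishes the general kernel identity of Lemma~1 with $K(x),M(y)$ depending on a parameter $\lambda$, proves the corresponding estimate (its first main theorem) via co-ordinated convexity, and then recovers \eqref{1.9} in a remark by setting $\lambda=\tfrac13$; under this choice the kernels become $(b-a)p(t)$ and $(d-c)p(s)$ with exactly your Simpson kernel $p$, and the constant $\tfrac{(b-a)(d-c)}{16}\bigl[2\lambda^{2}-2\lambda+1\bigr]^{2}=\tfrac{(b-a)(d-c)}{16}\cdot\tfrac{25}{81}=\tfrac{25(b-a)(d-c)}{72\cdot72}$ matches your moment computation $\bigl(\tfrac{5}{72}\bigr)^{2}$. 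Your closing observation that the hypothesis actually used is the co-ordinated convexity of $\left|\frac{\partial^{2}f}{\partial t\partial s}\right|$ (not of the partial derivative itself) is also well taken.
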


The main purpose of this paper is to establish a new lemma which give more
general results and different type inequalities for special values of $%
\lambda $ and to prove several inequalities.

\section{MAIN\ RESULTS}

In order to prove our main theorems we need the following lemma:

\begin{lemma}
Let $f:\Delta \subset 
\mathbb{R}
^{2}\rightarrow 
\mathbb{R}
$ be a differentiable function on $\Delta $ where $a<b,$ $c<d$ and $\lambda
\in \left[ 0,1\right] $, if $\frac{\partial ^{2}f}{\partial x\partial y}\in
L_{1}\left( \Delta \right) $, then the following equality holds:%
\begin{eqnarray*}
&&\left( 1-\lambda \right) ^{2}f\left( \frac{a+b}{2},\frac{c+d}{2}\right)
+\lambda ^{2}\frac{f\left( a,c\right) +f\left( a,d\right) +f\left(
b,c\right) +f\left( b,d\right) }{4} \\
&&+\frac{\lambda \left( 1-\lambda \right) }{2}\left[ f\left( \frac{a+b}{2}%
,c\right) +f\left( \frac{a+b}{2},d\right) +f\left( a,\frac{c+d}{2}\right)
+f\left( b,\frac{c+d}{2}\right) \right] \\
&&-\left( 1-\lambda \right) \frac{1}{d-c}\int_{c}^{d}f(\frac{a+b}{2}%
,y)dy-\left( 1-\lambda \right) \frac{1}{b-a}\int_{a}^{b}f(x,\frac{c+d}{2})dx
\\
&&-\lambda \frac{1}{2\left( b-a\right) }\int_{a}^{b}\left[ f(x,d)+f(x,c)%
\right] dx-\lambda \frac{1}{2\left( d-c\right) }\int_{c}^{d}\left[
f(a,y)+f(b,y)\right] dy \\
&&+\frac{1}{(b-a)(d-c)}\int_{a}^{b}\int_{c}^{d}f(x,y)dxdy \\
&=&\frac{1}{(b-a)(d-c)}\int_{a}^{b}\int_{c}^{d}K(x)M(y)\frac{\partial ^{2}f}{%
\partial x\partial y}(x,y)dydx
\end{eqnarray*}%
where%
\begin{equation*}
K(x)=\left\{ 
\begin{array}{c}
x-\left( a+\lambda \frac{b-a}{2}\right) ,\text{ \ \ }x\in \left[ a,\frac{a+b%
}{2}\right] \\ 
\\ 
x-\left( b-\lambda \frac{b-a}{2}\right) ,\text{ \ \ }x\in \left[ \frac{a+b}{2%
},b\right]%
\end{array}%
\right.
\end{equation*}%
and%
\begin{equation*}
M(y)=\left\{ 
\begin{array}{c}
y-\left( c+\lambda \frac{d-c}{2}\right) ,\text{ \ \ }y\in \left[ c,\frac{c+d%
}{2}\right] \\ 
\\ 
y-\left( d-\lambda \frac{d-c}{2}\right) ,\text{ \ \ }y\in \left[ \frac{c+d}{2%
},d\right]%
\end{array}%
\right. .
\end{equation*}
\end{lemma}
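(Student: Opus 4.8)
The plan is to evaluate the right-hand side directly and reduce it to the left-hand side by iterated integration by parts, exploiting the fact that the integrand splits as a product $K(x)\,M(y)\,\frac{\partial ^{2}f}{\partial x\partial y}(x,y)$. Set
\[
I=\int_{a}^{b}\int_{c}^{d}K(x)\,M(y)\,\frac{\partial ^{2}f}{\partial x\partial y}(x,y)\,dy\,dx;
\]
the goal is to show that $I$ equals $(b-a)(d-c)$ times the expression on the left, so that dividing by $(b-a)(d-c)$ yields the claimed identity.

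First I would fix $x$ and compute the inner integral in $y$. Since $M$ is given by two different linear branches, I split $\int_{c}^{d}=\int_{c}^{(c+d)/2}+\int_{(c+d)/2}^{d}$ and integrate by parts on each subinterval, using $M'(y)=1$ and regarding $\frac{\partial f}{\partial x}$ as the $y$-antiderivative of $\frac{\partial ^{2}f}{\partial x\partial y}$. Evaluating $M$ at the endpoints gives $M(c)=-\lambda\frac{d-c}{2}$, $M(d)=\lambda\frac{d-c}{2}$, together with the two one-sided values $M((c+d)/2^{-})=(1-\lambda)\frac{d-c}{2}$ and $M((c+d)/2^{+})=-(1-\lambda)\frac{d-c}{2}$ at the midpoint. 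Collecting the boundary contributions and the $-\int_{c}^{d}\frac{\partial f}{\partial x}\,dy$ terms produces
\[
\int_{c}^{d}M(y)\,\frac{\partial ^{2}f}{\partial x\partial y}(x,y)\,dy
=\lambda\frac{d-c}{2}\Big[\frac{\partial f}{\partial x}(x,c)+\frac{\partial f}{\partial x}(x,d)\Big]
+(1-\lambda)(d-c)\frac{\partial f}{\partial x}\!\left(x,\frac{c+d}{2}\right)
-\int_{c}^{d}\frac{\partial f}{\partial x}(x,y)\,dy.
\]

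Each term on the right is an $x$-derivative, so in the second stage I would multiply by $K(x)$, integrate in $x$, and integrate by parts once more, again splitting at $(a+b)/2$ and using $K'(x)=1$ together with $K(a)=-\lambda\frac{b-a}{2}$, $K(b)=\lambda\frac{b-a}{2}$, and $K((a+b)/2^{\mp})=\pm(1-\lambda)\frac{b-a}{2}$. This outer integration is formally identical to the inner one with $(a,b,x)$ in place of $(c,d,y)$, and it converts the $\frac{\partial f}{\partial x}$-values and the $x$-integrals into function values at $a,b,\frac{a+b}{2}$ and into the single and double integrals appearing on the left. It then remains to expand all products, group the corner terms (coefficient $\lambda^{2}$), the edge-midpoint terms (coefficient $\lambda(1-\lambda)/2$), the centre term (coefficient $(1-\lambda)^{2}$), and the integral terms, and to divide through by $(b-a)(d-c)$; the coefficients match the stated left-hand side termwise.

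The routine part is this final bookkeeping; the one point requiring care is the role of the discontinuities of $K$ and $M$ at the midpoints. Because $M((c+d)/2^{-})$ and $M((c+d)/2^{+})$ have opposite signs, the boundary contributions at the midpoint coming from the two subintervals \emph{reinforce} rather than cancel, and it is precisely this that generates the midpoint-evaluation term $(1-\lambda)(d-c)\frac{\partial f}{\partial x}(x,\frac{c+d}{2})$ and, after the outer step, the terms $f(\frac{a+b}{2},c)$, $f(a,\frac{c+d}{2})$, and $f(\frac{a+b}{2},\frac{c+d}{2})$. Keeping track of these jump terms and of the signs in the telescoping of the boundary values is the main obstacle; once it is handled correctly, the identity follows at once.
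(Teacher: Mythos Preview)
Your proposal is correct and follows essentially the same route as the paper: compute the right-hand side by iterated integration by parts, first in $y$ (splitting at $(c+d)/2$) to obtain exactly the intermediate expression
\[
\int_{a}^{b}K(x)\left[(1-\lambda)(d-c)\tfrac{\partial f}{\partial x}\!\left(x,\tfrac{c+d}{2}\right)+\lambda\tfrac{d-c}{2}\Big(\tfrac{\partial f}{\partial x}(x,c)+\tfrac{\partial f}{\partial x}(x,d)\Big)-\int_{c}^{d}\tfrac{\partial f}{\partial x}(x,y)\,dy\right]dx,
\]
then in $x$ (splitting at $(a+b)/2$), and finally divide by $(b-a)(d-c)$. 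Your explicit tracking of the one-sided values $M((c+d)/2^{\pm})$ and $K((a+b)/2^{\pm})$ is a bit more detailed than the paper's write-up, but the argument is the same.
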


\begin{proof}
Integration by parts, we get%
\begin{eqnarray*}
&&\int_{a}^{b}\int_{c}^{d}K(x)M(y)\frac{\partial ^{2}f}{\partial x\partial y}%
(x,y)dydx \\
&=&\int_{a}^{b}K(x)\left[ \int_{c}^{\frac{c+d}{2}}\left( y-\left( c+\lambda 
\frac{d-c}{2}\right) \right) \frac{\partial ^{2}f}{\partial x\partial y}%
(x,y)dy\right. \\
&&\left. +\int_{\frac{c+d}{2}}^{d}\left( y-\left( d-\lambda \frac{d-c}{2}%
\right) \right) \frac{\partial ^{2}f}{\partial x\partial y}(x,y)dy\right] dx
\\
&=&\int_{a}^{b}K(x)\left[ \left. \left( y-\left( c+\lambda \frac{d-c}{2}%
\right) \right) \frac{\partial f}{\partial x}(x,y)\right\vert _{c}^{\frac{c+d%
}{2}}-\int_{c}^{\frac{c+d}{2}}\frac{\partial f}{\partial x}(x,y)dy\right. \\
&&\left. \left. +\left( y-\left( d-\lambda \frac{d-c}{2}\right) \right) 
\frac{\partial f}{\partial x}(x,y)\right\vert _{\frac{c+d}{2}}^{d}-\int_{%
\frac{c+d}{2}}^{d}\frac{\partial f}{\partial x}(x,y)dy\right] dx \\
&=&\int_{a}^{b}K(x)\left[ \left( 1-\lambda \right) \left( d-c\right) \frac{%
\partial f}{\partial x}\left( x,\frac{c+d}{2}\right) \right. \\
&&\left. +\left( \lambda \frac{d-c}{2}\right) \left( \frac{\partial f}{%
\partial x}\left( x,c\right) +\frac{\partial f}{\partial x}\left( x,d\right)
\right) -\int_{c}^{d}\frac{\partial f}{\partial x}(x,y)dy\right] dx.
\end{eqnarray*}%
Integrating by parts again, we obtain%
\begin{eqnarray*}
&&\int_{a}^{b}\int_{c}^{d}K(x)M(y)\frac{\partial ^{2}f}{\partial x\partial y}%
(x,y)dydx \\
&=&\left( 1-\lambda \right) ^{2}(b-a)(d-c)\left[ f\left( \frac{a+b}{2},\frac{%
c+d}{2}\right) \right] \\
&&+\lambda ^{2}(b-a)(d-c)\left[ \frac{f\left( a,c\right) +f\left( a,d\right)
+f\left( b,c\right) +f\left( b,d\right) }{4}\right] \\
&&+\frac{\lambda \left( 1-\lambda \right) (b-a)(d-c)}{2}\left[ f\left( \frac{%
a+b}{2},c\right) +f\left( \frac{a+b}{2},d\right) \right. \\
&&\left. +f\left( a,\frac{c+d}{2}\right) +f\left( b,\frac{c+d}{2}\right) %
\right] -\left( 1-\lambda \right) (b-a)\int_{c}^{d}f(\frac{a+b}{2},y)dy \\
&&-\left( 1-\lambda \right) (d-c)\int_{a}^{b}f(x,\frac{c+d}{2})dx-\lambda 
\frac{(d-c)}{2}\int_{a}^{b}\left[ f(x,d)+f(x,c)\right] dx \\
&&-\lambda \frac{(b-a)}{2}\int_{c}^{d}\left[ f(a,y)+f(b,y)\right]
dy+\int_{a}^{b}\int_{c}^{d}f(x,y)dxdy.
\end{eqnarray*}%
Dividing both sides of the above equality by $(b-a)(d-c),$ we have the
required result.
\end{proof}

\begin{theorem}
Let $f:\Delta =[a,b]\times \lbrack c,d]\rightarrow 
\mathbb{R}
$ be a differentiable function on $\Delta $. If $\left\vert \frac{\partial
^{2}f}{\partial x\partial y}\right\vert $ is convex function on the
co-ordinates on $\Delta ,$ then one has the inequality:%
\begin{eqnarray*}
&&\left\vert \left( 1-\lambda \right) ^{2}f\left( \frac{a+b}{2},\frac{c+d}{2}%
\right) +\lambda ^{2}\frac{f\left( a,c\right) +f\left( a,d\right) +f\left(
b,c\right) +f\left( b,d\right) }{4}\right. \\
&&+\frac{\lambda \left( 1-\lambda \right) }{2}\left[ f\left( \frac{a+b}{2}%
,c\right) +f\left( \frac{a+b}{2},d\right) +f\left( a,\frac{c+d}{2}\right)
+f\left( b,\frac{c+d}{2}\right) \right] \\
&&-\left( 1-\lambda \right) \frac{1}{d-c}\int_{c}^{d}f(\frac{a+b}{2}%
,y)dy-\left( 1-\lambda \right) \frac{1}{b-a}\int_{a}^{b}f(x,\frac{c+d}{2})dx
\\
&&-\lambda \frac{1}{2\left( b-a\right) }\int_{a}^{b}\left[ f(x,d)+f(x,c)%
\right] dx-\lambda \frac{1}{2\left( d-c\right) }\int_{c}^{d}\left[
f(a,y)+f(b,y)\right] dy \\
&&\left. +\frac{1}{(b-a)(d-c)}\int_{a}^{b}\int_{c}^{d}f(x,y)dxdy\right\vert
\\
&\leq &\frac{(b-a)(d-c)}{16}\left[ 2\lambda ^{2}-2\lambda +1\right] ^{2} \\
&&\times \left( \frac{\left\vert \frac{\partial ^{2}f}{\partial x\partial y}%
\right\vert (a,c)+\left\vert \frac{\partial ^{2}f}{\partial x\partial y}%
\right\vert (a,d)+\left\vert \frac{\partial ^{2}f}{\partial x\partial y}%
\right\vert (b,c)+\left\vert \frac{\partial ^{2}f}{\partial x\partial y}%
\right\vert (b,d)}{4}\right) .
\end{eqnarray*}
\end{theorem}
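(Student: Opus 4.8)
The plan is to start from the integral identity established in the Lemma, take absolute values on both sides, and estimate the resulting kernel integral using the co-ordinated convexity of $\left\vert \frac{\partial ^{2}f}{\partial x\partial y}\right\vert $. Writing $g=\left\vert \frac{\partial ^{2}f}{\partial x\partial y}\right\vert $, the Lemma says that the quantity inside the absolute value on the left of the asserted inequality equals
\[
\frac{1}{(b-a)(d-c)}\int_{a}^{b}\int_{c}^{d}K(x)M(y)\,\frac{\partial ^{2}f}{\partial x\partial y}(x,y)\,dy\,dx,
\]
so by the triangle inequality for integrals the left-hand side is bounded by
\[
\frac{1}{(b-a)(d-c)}\int_{a}^{b}\int_{c}^{d}|K(x)|\,|M(y)|\,g(x,y)\,dy\,dx.
\]

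Next I would insert the co-ordinated convexity estimate. Writing $x=\frac{b-x}{b-a}\,a+\frac{x-a}{b-a}\,b$ and $y=\frac{d-y}{d-c}\,c+\frac{y-c}{d-c}\,d$ and applying convexity in each variable in turn gives the separable bound
\[
g(x,y)\le \tfrac{(b-x)(d-y)}{(b-a)(d-c)}g(a,c)+\tfrac{(b-x)(y-c)}{(b-a)(d-c)}g(a,d)+\tfrac{(x-a)(d-y)}{(b-a)(d-c)}g(b,c)+\tfrac{(x-a)(y-c)}{(b-a)(d-c)}g(b,d).
\]
Since $K$ depends only on $x$ and $M$ only on $y$, substituting this bound splits the double integral into products of one-dimensional integrals, and the coefficient of each corner value is a product of one factor from $\bigl\{\int_{a}^{b}|K(x)|(b-x)\,dx,\ \int_{a}^{b}|K(x)|(x-a)\,dx\bigr\}$ and one from the analogous pair in $y$.

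The simplification I would exploit is a symmetry. A direct check shows $K(a+b-x)=-K(x)$, so $|K|$ is symmetric about $\frac{a+b}{2}$; under the substitution $x\mapsto a+b-x$ the factor $(b-x)$ turns into $(x-a)$, whence $\int_{a}^{b}|K(x)|(b-x)\,dx=\int_{a}^{b}|K(x)|(x-a)\,dx=\frac{b-a}{2}\int_{a}^{b}|K(x)|\,dx$, and likewise for $M$. Consequently all four corner coefficients coincide and the estimate collapses to a symmetric average of $g$ over the four vertices, leaving only $\int_{a}^{b}|K(x)|\,dx$ and $\int_{c}^{d}|M(y)|\,dy$ to be computed.

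The main obstacle is precisely this last explicit evaluation, because on $[a,b]$ the function $K$ vanishes twice, at $x=a+\lambda \frac{b-a}{2}$ and at $x=b-\lambda \frac{b-a}{2}$ (one zero in each half), so $|K|$ must be integrated over four sign-regions. Carrying out the four triangular pieces gives $\int_{a}^{b}|K(x)|\,dx=\frac{(b-a)^{2}}{4}\bigl[2\lambda ^{2}-2\lambda +1\bigr]$, and by the same computation $\int_{c}^{d}|M(y)|\,dy=\frac{(d-c)^{2}}{4}\bigl[2\lambda ^{2}-2\lambda +1\bigr]$. Substituting these back, the two symmetric integrals each contribute a factor $[2\lambda ^{2}-2\lambda +1]$ and the powers of $(b-a)$ and $(d-c)$ combine to give exactly $\frac{(b-a)(d-c)}{16}\bigl[2\lambda ^{2}-2\lambda +1\bigr]^{2}$ multiplying the average of the four corner values, which is the claimed bound.
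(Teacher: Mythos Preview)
Your argument is correct and follows the same overall strategy as the paper: start from Lemma~1, pass to absolute values, invoke co-ordinated convexity of $\left\vert \frac{\partial^{2}f}{\partial x\partial y}\right\vert$, and then evaluate the kernel integrals. The paper carries this out by the change of variables $y=sd+(1-s)c$ (and then $x=tb+(1-t)a$), splitting each into four sign-regions and computing the eight resulting weighted integrals explicitly before summing; your symmetry observation $K(a+b-x)=-K(x)$ (and likewise for $M$) collapses those weighted integrals to $\tfrac{b-a}{2}\int_{a}^{b}|K(x)|\,dx$ and $\tfrac{d-c}{2}\int_{c}^{d}|M(y)|\,dy$, so you only need to evaluate the unweighted $L^{1}$-norms of the kernels. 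This is a cleaner bookkeeping of the same computation rather than a different route, but it does make transparent why all four corner values enter with equal coefficients.
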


\begin{proof}
From Lemma 1 and property of the modulus, we can write%
\begin{eqnarray*}
&&\left\vert \left( 1-\lambda \right) ^{2}f\left( \frac{a+b}{2},\frac{c+d}{2}%
\right) +\lambda ^{2}\frac{f\left( a,c\right) +f\left( a,d\right) +f\left(
b,c\right) +f\left( b,d\right) }{4}\right. \\
&&+\frac{\lambda \left( 1-\lambda \right) }{2}\left[ f\left( \frac{a+b}{2}%
,c\right) +f\left( \frac{a+b}{2},d\right) +f\left( a,\frac{c+d}{2}\right)
+f\left( b,\frac{c+d}{2}\right) \right] \\
&&-\left( 1-\lambda \right) \frac{1}{d-c}\int_{c}^{d}f(\frac{a+b}{2}%
,y)dy-\left( 1-\lambda \right) \frac{1}{b-a}\int_{a}^{b}f(x,\frac{c+d}{2})dx
\\
&&-\lambda \frac{1}{2\left( b-a\right) }\int_{a}^{b}\left[ f(x,d)+f(x,c)%
\right] dx-\lambda \frac{1}{2\left( d-c\right) }\int_{c}^{d}\left[
f(a,y)+f(b,y)\right] dy \\
&&\left. +\frac{1}{(b-a)(d-c)}\int_{a}^{b}\int_{c}^{d}f(x,y)dxdy\right\vert
\\
&\leq &\frac{1}{(b-a)(d-c)}\int_{a}^{b}\int_{c}^{d}\left\vert
K(x)M(y)\right\vert \left\vert \frac{\partial ^{2}f}{\partial x\partial y}%
\right\vert (x,y)dydx.
\end{eqnarray*}%
By using the change of variables $y=sd+\left( 1-s\right) c,$ $\left(
d-c\right) ds=dy$, we obatin%
\begin{eqnarray*}
&&\left\vert \left( 1-\lambda \right) ^{2}f\left( \frac{a+b}{2},\frac{c+d}{2}%
\right) +\lambda ^{2}\frac{f\left( a,c\right) +f\left( a,d\right) +f\left(
b,c\right) +f\left( b,d\right) }{4}\right. \\
&&+\frac{\lambda \left( 1-\lambda \right) }{2}\left[ f\left( \frac{a+b}{2}%
,c\right) +f\left( \frac{a+b}{2},d\right) +f\left( a,\frac{c+d}{2}\right)
+f\left( b,\frac{c+d}{2}\right) \right] \\
&&-\left( 1-\lambda \right) \frac{1}{d-c}\int_{c}^{d}f(\frac{a+b}{2}%
,y)dy-\left( 1-\lambda \right) \frac{1}{b-a}\int_{a}^{b}f(x,\frac{c+d}{2})dx
\\
&&-\lambda \frac{1}{2\left( b-a\right) }\int_{a}^{b}\left[ f(x,d)+f(x,c)%
\right] dx-\lambda \frac{1}{2\left( d-c\right) }\int_{c}^{d}\left[
f(a,y)+f(b,y)\right] dy \\
&&\left. +\frac{1}{(b-a)(d-c)}\int_{a}^{b}\int_{c}^{d}f(x,y)dxdy\right\vert
\\
&\leq &\frac{d-c}{b-a}\int_{a}^{b}\left\vert K(x)\right\vert \left\{
\int_{0}^{\frac{\lambda }{2}}\left( \frac{\lambda }{2}-s\right) \left\vert 
\frac{\partial ^{2}f}{\partial x\partial y}(x,sd+\left( 1-s\right)
c)\right\vert ds\right. \\
&&+\int_{\frac{\lambda }{2}}^{\frac{1}{2}}\left( s-\frac{\lambda }{2}\right)
\left\vert \frac{\partial ^{2}f}{\partial x\partial y}(x,sd+\left(
1-s\right) c)\right\vert ds \\
&&+\int_{\frac{1}{2}}^{1-\frac{\lambda }{2}}\left( 1-\frac{\lambda }{2}%
-s\right) \left\vert \frac{\partial ^{2}f}{\partial x\partial y}(x,sd+\left(
1-s\right) c)\right\vert ds \\
&&\left. +\int_{1-\frac{\lambda }{2}}^{1}\left( s-1+\frac{\lambda }{2}%
\right) \left\vert \frac{\partial ^{2}f}{\partial x\partial y}(x,sd+\left(
1-s\right) c)\right\vert ds\right\} dx.
\end{eqnarray*}%
Since $\left\vert \frac{\partial ^{2}f}{\partial x\partial y}\right\vert $
is convex function on the co-ordinates on $\Delta $, we have%
\begin{eqnarray*}
&&\frac{1}{(b-a)(d-c)}\int_{a}^{b}\int_{c}^{d}\left\vert K(x)M(y)\right\vert
\left\vert \frac{\partial ^{2}f}{\partial x\partial y}\right\vert (x,y)dydx
\\
&\leq &\frac{d-c}{b-a}\int_{a}^{b}\left\vert K(x)\right\vert \left\{
\int_{0}^{\frac{\lambda }{2}}s\left( \frac{\lambda }{2}-s\right) \left\vert 
\frac{\partial ^{2}f}{\partial x\partial y}(x,d)\right\vert ds\right. \\
&&+\int_{0}^{\frac{\lambda }{2}}\left( 1-s\right) \left( \frac{\lambda }{2}%
-s\right) \left\vert \frac{\partial ^{2}f}{\partial x\partial y}%
(x,c)\right\vert ds \\
&&+\int_{\frac{\lambda }{2}}^{\frac{1}{2}}s\left( s-\frac{\lambda }{2}%
\right) \left\vert \frac{\partial ^{2}f}{\partial x\partial y}%
(x,d)\right\vert ds+\int_{\frac{\lambda }{2}}^{\frac{1}{2}}\left( 1-s\right)
\left( s-\frac{\lambda }{2}\right) \left\vert \frac{\partial ^{2}f}{\partial
x\partial y}(x,c)\right\vert ds \\
&&+\int_{\frac{1}{2}}^{1-\frac{\lambda }{2}}s\left( 1-\frac{\lambda }{2}%
-s\right) \left\vert \frac{\partial ^{2}f}{\partial x\partial y}%
(x,d)\right\vert ds \\
&&+\int_{\frac{1}{2}}^{1-\frac{\lambda }{2}}\left( 1-s\right) \left( 1-\frac{%
\lambda }{2}-s\right) \left\vert \frac{\partial ^{2}f}{\partial x\partial y}%
(x,c)\right\vert ds \\
&&+\int_{1-\frac{\lambda }{2}}^{1}s\left( s-1+\frac{\lambda }{2}\right)
\left\vert \frac{\partial ^{2}f}{\partial x\partial y}(x,d)\right\vert ds \\
&&\left. +\int_{1-\frac{\lambda }{2}}^{1}\left( 1-s\right) \left( s-1+\frac{%
\lambda }{2}\right) \left\vert \frac{\partial ^{2}f}{\partial x\partial y}%
(x,c)\right\vert ds\right\} dx.
\end{eqnarray*}%
By calculating the above integrals, we obtain%
\begin{eqnarray*}
&&\left\vert \left( 1-\lambda \right) ^{2}f\left( \frac{a+b}{2},\frac{c+d}{2}%
\right) +\lambda ^{2}\frac{f\left( a,c\right) +f\left( a,d\right) +f\left(
b,c\right) +f\left( b,d\right) }{4}\right. \\
&&+\frac{\lambda \left( 1-\lambda \right) }{2}\left[ f\left( \frac{a+b}{2}%
,c\right) +f\left( \frac{a+b}{2},d\right) +f\left( a,\frac{c+d}{2}\right)
+f\left( b,\frac{c+d}{2}\right) \right] \\
&&-\left( 1-\lambda \right) \frac{1}{d-c}\int_{c}^{d}f(\frac{a+b}{2}%
,y)dy-\left( 1-\lambda \right) \frac{1}{b-a}\int_{a}^{b}f(x,\frac{c+d}{2})dx
\\
&&-\lambda \frac{1}{2\left( b-a\right) }\int_{a}^{b}\left[ f(x,d)+f(x,c)%
\right] dx-\lambda \frac{1}{2\left( d-c\right) }\int_{c}^{d}\left[
f(a,y)+f(b,y)\right] dy \\
&&\left. +\frac{1}{(b-a)(d-c)}\int_{a}^{b}\int_{c}^{d}f(x,y)dxdy\right\vert
\\
&\leq &\frac{d-c}{b-a}\int_{a}^{b}\left\vert K(x)\right\vert \left\{ \left[ 
\frac{2\lambda ^{2}-2\lambda +1}{8}\right] \left( \left\vert \frac{\partial
^{2}f}{\partial x\partial y}(x,c)\right\vert +\left\vert \frac{\partial ^{2}f%
}{\partial x\partial y}(x,d)\right\vert \right) \right\} .
\end{eqnarray*}%
By a similar argument for other integrals, by using the change of variable $%
x=tb+(1-t)a,$ $(b-a)dt=dx$ and convexity of $\left\vert \frac{\partial ^{2}f%
}{\partial x\partial y}(x,y)\right\vert $ on the co-ordinates on $\Delta ,$
we deduce the result. Which completes the proof.
\end{proof}

\begin{remark}
If we choose $\lambda =1$ in Theorem 5, we have the inequality (\ref{1.3}).
\end{remark}

\begin{remark}
If we choose $\lambda =0$ in Theorem 5, we have the inequality (\ref{1.6}).
\end{remark}

\begin{remark}
If we choose $\lambda =\frac{1}{3}$ in Theorem 5, we have the inequality (%
\ref{1.9}).
\end{remark}

\begin{theorem}
Let $f:\Delta =[a,b]\times \lbrack c,d]\rightarrow 
\mathbb{R}
$ be a differentiable function on $\Delta $. If $\left\vert \frac{\partial
^{2}f}{\partial x\partial y}\right\vert ^{\frac{p}{p-1}}$ is convex function
on the co-ordinates on $\Delta ,$ then one has the inequality:%
\begin{eqnarray*}
&&\left\vert \left( 1-\lambda \right) ^{2}f\left( \frac{a+b}{2},\frac{c+d}{2}%
\right) +\lambda ^{2}\frac{f\left( a,c\right) +f\left( a,d\right) +f\left(
b,c\right) +f\left( b,d\right) }{4}\right. \\
&&+\frac{\lambda \left( 1-\lambda \right) }{2}\left[ f\left( \frac{a+b}{2}%
,c\right) +f\left( \frac{a+b}{2},d\right) +f\left( a,\frac{c+d}{2}\right)
+f\left( b,\frac{c+d}{2}\right) \right] \\
&&-\left( 1-\lambda \right) \frac{1}{d-c}\int_{c}^{d}f(\frac{a+b}{2}%
,y)dy-\left( 1-\lambda \right) \frac{1}{b-a}\int_{a}^{b}f(x,\frac{c+d}{2})dx
\\
&&-\lambda \frac{1}{2\left( b-a\right) }\int_{a}^{b}\left[ f(x,d)+f(x,c)%
\right] dx-\lambda \frac{1}{2\left( d-c\right) }\int_{c}^{d}\left[
f(a,y)+f(b,y)\right] dy \\
&&\left. +\frac{1}{(b-a)(d-c)}\int_{a}^{b}\int_{c}^{d}f(x,y)dxdy\right\vert
\\
&\leq &\frac{(b-a)(d-c)}{4\left( p+1\right) ^{\frac{2}{p}}}\left[ 2\lambda
^{2}-2\lambda +1\right] ^{2} \\
&&\times \left( \frac{\left\vert \frac{\partial ^{2}f}{\partial x\partial y}%
\right\vert ^{q}(a,c)+\left\vert \frac{\partial ^{2}f}{\partial x\partial y}%
\right\vert ^{q}(a,d)+\left\vert \frac{\partial ^{2}f}{\partial x\partial y}%
\right\vert ^{q}(b,c)+\left\vert \frac{\partial ^{2}f}{\partial x\partial y}%
\right\vert ^{q}(b,d)}{4}\right) ^{\frac{1}{q}}
\end{eqnarray*}%
for $q>1,$ where $q=\frac{p}{p-1}.$
\end{theorem}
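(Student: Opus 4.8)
The plan is to follow the template of the proof of Theorem 5, but to insert H\"older's inequality in place of the direct convexity estimate. Writing $f_{xy}=\frac{\partial^2 f}{\partial x\partial y}$ for brevity, I would first invoke Lemma 1, which represents the whole left-hand bracket exactly as $\frac{1}{(b-a)(d-c)}\int_a^b\int_c^d K(x)M(y)f_{xy}(x,y)\,dy\,dx$, and then pass to moduli by the triangle inequality to obtain
\[
\text{(LHS)}\le \frac{1}{(b-a)(d-c)}\int_a^b\int_c^d |K(x)|\,|M(y)|\,\bigl|f_{xy}(x,y)\bigr|\,dy\,dx.
\]

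Next I would apply H\"older's inequality on the rectangle $\Delta$ with the conjugate exponents $p$ and $q=\frac{p}{p-1}$, grouping the integrand as $|K(x)M(y)|\cdot|f_{xy}(x,y)|$. Since $|K(x)M(y)|^{p}=|K(x)|^{p}|M(y)|^{p}$, the kernel factor splits into a product of one-variable integrals, so the bound becomes
\[
\frac{1}{(b-a)(d-c)}\Bigl(\int_a^b|K(x)|^{p}\,dx\Bigr)^{1/p}\Bigl(\int_c^d|M(y)|^{p}\,dy\Bigr)^{1/p}\Bigl(\int_a^b\int_c^d|f_{xy}|^{q}\,dy\,dx\Bigr)^{1/q}.
\]

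The two kernel integrals I would evaluate by the substitutions $x=tb+(1-t)a$ and $y=sd+(1-s)c$, under which $K$ and $M$ become the piecewise-linear profiles $(b-a)(t-\lambda/2)$ on $[0,\tfrac12]$ and $(b-a)(t-1+\lambda/2)$ on $[\tfrac12,1]$ (and likewise for $M$). Splitting each integral at the midpoint and at the breakpoints $\lambda/2$ and $1-\lambda/2$ reduces everything to elementary power integrals; this is the step that generates the denominator $(p+1)^{2/p}$ together with the $\lambda$-dependent constant. For the derivative factor I would use that $|f_{xy}|^{q}$ is convex on the co-ordinates, so that the bilinear interpolation estimate $|f_{xy}(tb+(1-t)a,\,sd+(1-s)c)|^{q}\le ts\,|f_{xy}(b,d)|^{q}+t(1-s)\,|f_{xy}(b,c)|^{q}+(1-t)s\,|f_{xy}(a,d)|^{q}+(1-t)(1-s)\,|f_{xy}(a,c)|^{q}$ holds; integrating over $t,s\in[0,1]$ yields $\int_a^b\int_c^d|f_{xy}|^{q}\,dy\,dx\le (b-a)(d-c)\,\frac{|f_{xy}(a,c)|^{q}+|f_{xy}(a,d)|^{q}+|f_{xy}(b,c)|^{q}+|f_{xy}(b,d)|^{q}}{4}$, which is precisely the averaged corner term appearing with exponent $1/q$ in the conclusion.

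Finally I would multiply the three factors together with the prefactor $\frac{1}{(b-a)(d-c)}$ and check that the lengths recombine: the kernel contributes $(b-a)^{1+1/p}(d-c)^{1+1/p}$, the derivative factor contributes $[(b-a)(d-c)]^{1/q}$, and since $\frac1p+\frac1q=1$ the net power of each of $b-a$ and $d-c$ is exactly one, leaving the stated prefactor $(b-a)(d-c)$. The main obstacle I anticipate is purely computational and lies in the kernel step: one must carry the four subintervals through the power integrals without sign errors and then identify the emerging $\lambda$-dependent constant, confirming that it matches the claimed factor $[2\lambda^2-2\lambda+1]^2$ (in particular at the distinguished values $\lambda=0,\tfrac13,\tfrac12,1$). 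This constant-tracking, not any conceptual step, is where the care is needed.
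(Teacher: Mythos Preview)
Your plan is exactly the paper's: invoke Lemma~1, apply H\"older on the rectangle with exponents $(p,q)$, factor the kernel integral as a product of one-variable $L^p$-norms, and bound $\int\!\!\int|f_{xy}|^q$ via the bilinear co-ordinate-convexity estimate. The paper's proof is in fact more telegraphic than yours and skips the kernel evaluation entirely.

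Be aware, though, that the step you correctly flag as the delicate one will \emph{not} reproduce the stated factor $[2\lambda^{2}-2\lambda+1]^{2}$. Carrying the computation through gives
\[
\Bigl(\int_a^b|K(x)|^{p}\,dx\Bigr)^{1/p}\Bigl(\int_c^d|M(y)|^{p}\,dy\Bigr)^{1/p}
=\frac{(b-a)^{1+1/p}(d-c)^{1+1/p}}{4(p+1)^{2/p}}\,
\bigl[\lambda^{p+1}+(1-\lambda)^{p+1}\bigr]^{2/p},
\]
so the $\lambda$-dependent constant that genuinely emerges from the H\"older route is $\bigl[\lambda^{p+1}+(1-\lambda)^{p+1}\bigr]^{2/p}$. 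This coincides with the paper's $[2\lambda^{2}-2\lambda+1]^{2}=[\lambda^{2}+(1-\lambda)^{2}]^{2}$ only at $\lambda\in\{0,\tfrac12,1\}$; at your test value $\lambda=\tfrac13$ with $p=2$ one gets $\tfrac13$ versus $\tfrac{25}{81}$, and since $\tfrac13>\tfrac{25}{81}$ the H\"older bound is strictly weaker than what is asserted. The factor $[2\lambda^{2}-2\lambda+1]^{2}$ is the $L^{1}$-kernel constant that legitimately appears in Theorems~5 and~7 of the paper (where $\int|K||M|$, not $\int|K|^{p}|M|^{p}$, is computed) and seems to have been transcribed here by oversight. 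Your method is sound and matches the paper's; just expect to land on the corrected constant above rather than the one printed.
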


\begin{proof}
Let $p>1.$ From Lemma 1 and using the H\"{o}lder inequality for double
integrals, we can write%
\begin{eqnarray*}
&&\left\vert \left( 1-\lambda \right) ^{2}f\left( \frac{a+b}{2},\frac{c+d}{2}%
\right) +\lambda ^{2}\frac{f\left( a,c\right) +f\left( a,d\right) +f\left(
b,c\right) +f\left( b,d\right) }{4}\right. \\
&&+\frac{\lambda \left( 1-\lambda \right) }{2}\left[ f\left( \frac{a+b}{2}%
,c\right) +f\left( \frac{a+b}{2},d\right) +f\left( a,\frac{c+d}{2}\right)
+f\left( b,\frac{c+d}{2}\right) \right] \\
&&-\left( 1-\lambda \right) \frac{1}{d-c}\int_{c}^{d}f(\frac{a+b}{2}%
,y)dy-\left( 1-\lambda \right) \frac{1}{b-a}\int_{a}^{b}f(x,\frac{c+d}{2})dx
\\
&&-\lambda \frac{1}{2\left( b-a\right) }\int_{a}^{b}\left[ f(x,d)+f(x,c)%
\right] dx-\lambda \frac{1}{2\left( d-c\right) }\int_{c}^{d}\left[
f(a,y)+f(b,y)\right] dy \\
&&\left. +\frac{1}{(b-a)(d-c)}\int_{a}^{b}\int_{c}^{d}f(x,y)dxdy\right\vert
\\
&\leq &\frac{1}{(b-a)(d-c)}\left( \int_{a}^{b}\int_{c}^{d}\left\vert
K(x)M(y)\right\vert ^{p}dydx\right) ^{\frac{1}{p}}\left(
\int_{a}^{b}\int_{c}^{d}\left\vert \frac{\partial ^{2}f}{\partial x\partial y%
}(x,y)\right\vert ^{q}dydx\right) ^{\frac{1}{q}}.
\end{eqnarray*}%
Since $\left\vert \frac{\partial ^{2}f}{\partial x\partial y}\right\vert ^{%
\frac{p}{p-1}}$ is convex function on the co-ordinates on $\Delta ,$ by
taking into account the change of variable $x=tb+(1-t)a,$ $(b-a)dt=dt$ and $%
y=sd+(1-s)c,$ $(d-c)ds=dy,$ we have%
\begin{equation*}
\left\vert \frac{\partial ^{2}f}{\partial x\partial y}\left( tb+\left(
1-t\right) a,y\right) \right\vert ^{q}\leq t\left\vert \frac{\partial ^{2}f}{%
\partial x\partial y}\left( b,y\right) \right\vert ^{q}+\left( 1-t\right)
\left\vert \frac{\partial ^{2}f}{\partial x\partial y}\left( a,y\right)
\right\vert ^{q}
\end{equation*}%
and%
\begin{eqnarray*}
&&\left\vert \frac{\partial ^{2}f}{\partial x\partial y}\left( tb+\left(
1-t\right) a,sd+\left( 1-s\right) c\right) \right\vert ^{q} \\
&\leq &ts\left\vert \frac{\partial ^{2}f}{\partial x\partial y}\right\vert
^{q}(b,d)+t\left( 1-s\right) \left\vert \frac{\partial ^{2}f}{\partial
x\partial y}\right\vert ^{q}(b,c) \\
&&+s\left( 1-t\right) \left\vert \frac{\partial ^{2}f}{\partial x\partial y}%
\right\vert ^{q}(a,d)+\left( 1-t\right) (1-s)\left\vert \frac{\partial ^{2}f%
}{\partial x\partial y}\right\vert ^{q}(a,c)
\end{eqnarray*}%
thus, we obtain%
\begin{eqnarray*}
&&\left\vert \left( 1-\lambda \right) ^{2}f\left( \frac{a+b}{2},\frac{c+d}{2}%
\right) +\lambda ^{2}\frac{f\left( a,c\right) +f\left( a,d\right) +f\left(
b,c\right) +f\left( b,d\right) }{4}\right. \\
&&+\frac{\lambda \left( 1-\lambda \right) }{2}\left[ f\left( \frac{a+b}{2}%
,c\right) +f\left( \frac{a+b}{2},d\right) +f\left( a,\frac{c+d}{2}\right)
+f\left( b,\frac{c+d}{2}\right) \right] \\
&&-\left( 1-\lambda \right) \frac{1}{d-c}\int_{c}^{d}f(\frac{a+b}{2}%
,y)dy-\left( 1-\lambda \right) \frac{1}{b-a}\int_{a}^{b}f(x,\frac{c+d}{2})dx
\\
&&-\lambda \frac{1}{2\left( b-a\right) }\int_{a}^{b}\left[ f(x,d)+f(x,c)%
\right] dx-\lambda \frac{1}{2\left( d-c\right) }\int_{c}^{d}\left[
f(a,y)+f(b,y)\right] dy \\
&&\left. +\frac{1}{(b-a)(d-c)}\int_{a}^{b}\int_{c}^{d}f(x,y)dxdy\right\vert
\\
&\leq &\frac{(b-a)(d-c)}{4\left( p+1\right) ^{\frac{2}{p}}}\left[ 2\lambda
^{2}-2\lambda +1\right] ^{2} \\
&&\times \left( \frac{\left\vert \frac{\partial ^{2}f}{\partial x\partial y}%
\right\vert ^{q}(a,c)+\left\vert \frac{\partial ^{2}f}{\partial x\partial y}%
\right\vert ^{q}(a,d)+\left\vert \frac{\partial ^{2}f}{\partial x\partial y}%
\right\vert ^{q}(b,c)+\left\vert \frac{\partial ^{2}f}{\partial x\partial y}%
\right\vert ^{q}(b,d)}{4}\right) ^{\frac{1}{q}}.
\end{eqnarray*}%
Which completes the proof.
\end{proof}

\begin{remark}
Under the assumptions of Theorem 6, if we choose $\lambda =1,$ we have the
inequality (\ref{1.4}).
\end{remark}

\begin{remark}
Under the assumptions of Theorem 6, if we choose $\lambda =0,$ we have the
inequality (\ref{1.7}).
\end{remark}

\begin{corollary}
\label{cet}Under the assumptions of Theorem 6, if we choose $\lambda =\frac{1%
}{3},$ we have the inequality%
\begin{eqnarray*}
&&\left\vert \frac{f\left( a,\frac{c+d}{2}\right) +f\left( b,\frac{c+d}{2}%
\right) +4f\left( \frac{a+b}{2},\frac{c+d}{2}\right) +f\left( \frac{a+b}{2}%
,c\right) +f\left( \frac{a+b}{2},d\right) }{9}\right. \\
&&\left. +\frac{f\left( a,c\right) +f\left( b,c\right) +f\left( a,d\right)
+f\left( b,d\right) }{36}+\frac{1}{\left( b-a\right) \left( d-c\right) }%
\int_{a}^{b}\int_{c}^{d}f\left( x,y\right) dydx-A\right\vert \\
&\leq &\frac{25\left( b-a\right) \left( d-c\right) }{324\left( p+1\right) ^{%
\frac{2}{p}}} \\
&&\times \left( \frac{\left\vert \frac{\partial ^{2}f}{\partial x\partial y}%
\right\vert ^{q}(a,c)+\left\vert \frac{\partial ^{2}f}{\partial x\partial y}%
\right\vert ^{q}(a,d)+\left\vert \frac{\partial ^{2}f}{\partial x\partial y}%
\right\vert ^{q}(b,c)+\left\vert \frac{\partial ^{2}f}{\partial x\partial y}%
\right\vert ^{q}(b,d)}{4}\right) ^{\frac{1}{q}}
\end{eqnarray*}%
where%
\begin{eqnarray*}
A &=&\frac{1}{6\left( b-a\right) }\int_{a}^{b}\left[ f\left( x,c\right)
+4f\left( x,\frac{c+d}{2}\right) +f\left( x,d\right) \right] dx \\
&&+\frac{1}{6\left( d-c\right) }\int_{c}^{d}\left[ f\left( a,y\right)
+4f\left( \frac{a+b}{2},y\right) +f\left( b,y\right) \right] dy.
\end{eqnarray*}
\end{corollary}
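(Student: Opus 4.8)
The plan is to obtain this corollary as a direct specialization of Theorem 6 at the value $\lambda=\frac{1}{3}$, so that the only genuine work is the arithmetic simplification of the coefficients on both sides of that inequality; no new analytic ideas are needed.

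First I would substitute $\lambda=\frac{1}{3}$ into the weights multiplying the point evaluations on the left-hand side of Theorem 6. One computes $(1-\lambda)^2=\frac{4}{9}$, $\lambda^2=\frac{1}{9}$, and $\frac{\lambda(1-\lambda)}{2}=\frac{1}{9}$. Hence the center term $(1-\lambda)^2 f(\frac{a+b}{2},\frac{c+d}{2})$ contributes $\frac{4}{9}f(\frac{a+b}{2},\frac{c+d}{2})$, the four edge-midpoint evaluations $f(\frac{a+b}{2},c)$, $f(\frac{a+b}{2},d)$, $f(a,\frac{c+d}{2})$, $f(b,\frac{c+d}{2})$ each acquire the common factor $\frac{1}{9}$, and the four-corner average picks up $\lambda^2=\frac{1}{9}$, yielding $\frac{f(a,c)+f(a,d)+f(b,c)+f(b,d)}{36}$. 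Collecting the $\frac{4}{9}$ center weight together with the four $\frac{1}{9}$ edge-midpoint weights into a single fraction of denominator $9$ reproduces exactly the first displayed fraction in the corollary, while the corner contribution supplies the $\frac{1}{36}$ term.

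Next I would treat the four single integrals and the double integral. With $\lambda=\frac{1}{3}$ the coefficients become $1-\lambda=\frac{2}{3}$ in front of $\frac{1}{d-c}\int_c^d f(\frac{a+b}{2},y)\,dy$ and $\frac{1}{b-a}\int_a^b f(x,\frac{c+d}{2})\,dx$, and $\frac{\lambda}{2}=\frac{1}{6}$ in front of $\frac{1}{b-a}\int_a^b[f(x,c)+f(x,d)]\,dx$ and $\frac{1}{d-c}\int_c^d[f(a,y)+f(b,y)]\,dy$. The key bookkeeping step is to recognize that the negative of the sum of these four integral terms is precisely $-A$, where $A$ is the Simpson-type average defined in the statement: the weight $\frac{2}{3}=\frac{4}{6}$ on the midpoint integrals matches the coefficient $4$ inside $A$, while the weight $\frac{1}{6}$ on the endpoint integrals matches the two unit coefficients inside $A$. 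Together with the unchanged double integral $\frac{1}{(b-a)(d-c)}\int_a^b\int_c^d f(x,y)\,dy\,dx$, this shows that the full left-hand side of Theorem 6 collapses to the expression inside the absolute value in the corollary.

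Finally I would simplify the constant on the right. Evaluating the quadratic factor at $\lambda=\frac{1}{3}$ gives $2\lambda^2-2\lambda+1=\frac{2}{9}-\frac{2}{3}+1=\frac{5}{9}$, so $[2\lambda^2-2\lambda+1]^2=\frac{25}{81}$, and therefore $\frac{(b-a)(d-c)}{4(p+1)^{2/p}}\cdot\frac{25}{81}=\frac{25(b-a)(d-c)}{324(p+1)^{2/p}}$, which is the asserted constant. Since the $L^q$-average of the four second-order mixed partial derivatives is carried over from Theorem 6 verbatim, this completes the proof. I do not anticipate any real obstacle; the only point demanding care is the bookkeeping of the previous paragraph that identifies the four weighted single integrals with the definition of $A$, which is routine once the weights $\frac{2}{3}$ and $\frac{1}{6}$ are rewritten as $\frac{4}{6}$ and $\frac{1}{6}$.
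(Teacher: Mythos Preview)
Your proposal is correct and follows exactly the paper's approach: the corollary is obtained simply by substituting $\lambda=\tfrac{1}{3}$ into Theorem 6, and the paper offers no argument beyond that substitution. Your verification of the arithmetic for the point-evaluation weights, the identification of the four single integrals with $-A$, and the constant $\tfrac{25}{324(p+1)^{2/p}}$ is accurate.
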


\begin{corollary}
In Corollary \ref{cet}, since $\frac{1}{4}<\frac{1}{(p+1)^{\frac{2}{p}}}<1,$
for $p>1$, we have the following inequality;%
\begin{eqnarray*}
&&\left\vert \frac{f\left( a,\frac{c+d}{2}\right) +f\left( b,\frac{c+d}{2}%
\right) +4f\left( \frac{a+b}{2},\frac{c+d}{2}\right) +f\left( \frac{a+b}{2}%
,c\right) +f\left( \frac{a+b}{2},d\right) }{9}\right. \\
&&\left. +\frac{f\left( a,c\right) +f\left( b,c\right) +f\left( a,d\right)
+f\left( b,d\right) }{36}+\frac{1}{\left( b-a\right) \left( d-c\right) }%
\int_{a}^{b}\int_{c}^{d}f\left( x,y\right) dydx-A\right\vert \\
&\leq &\frac{25\left( b-a\right) \left( d-c\right) }{324} \\
&&\times \left( \frac{\left\vert \frac{\partial ^{2}f}{\partial x\partial y}%
\right\vert ^{q}(a,c)+\left\vert \frac{\partial ^{2}f}{\partial x\partial y}%
\right\vert ^{q}(a,d)+\left\vert \frac{\partial ^{2}f}{\partial x\partial y}%
\right\vert ^{q}(b,c)+\left\vert \frac{\partial ^{2}f}{\partial x\partial y}%
\right\vert ^{q}(b,d)}{4}\right) ^{\frac{1}{q}}.
\end{eqnarray*}
\end{corollary}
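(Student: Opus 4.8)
The plan is to obtain the displayed inequality directly from Corollary \ref{cet} by replacing the numerical factor $\frac{1}{(p+1)^{2/p}}$ appearing there with the larger constant $1$. The only substantive point is the elementary two-sided estimate $\frac{1}{4}<\frac{1}{(p+1)^{2/p}}<1$ for $p>1$; of this, only the upper bound is actually needed to reach the stated conclusion, the lower bound being recorded merely to indicate how much is lost.

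First I would prove the upper bound $\frac{1}{(p+1)^{2/p}}<1$. For $p>1$ the base satisfies $p+1>1$ and the exponent $\frac{2}{p}$ is positive, so $(p+1)^{2/p}>1$ and hence $\frac{1}{(p+1)^{2/p}}<1$. This is the entire content required for the inequality.

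For the lower bound, which amounts to $(p+1)^{2/p}<4$, I would study $g(p)=(p+1)^{2/p}=\exp\!\left(\frac{2}{p}\ln(p+1)\right)$ on $(1,\infty)$. Differentiating $\ln g(p)=\frac{2}{p}\ln(p+1)$ gives $\frac{d}{dp}\ln g(p)=\frac{2}{p^{2}}\left(\frac{p}{p+1}-\ln(p+1)\right)$. Setting $h(p)=\ln(p+1)-\frac{p}{p+1}$, one computes $h'(p)=\frac{p}{(p+1)^{2}}>0$ for $p>0$, while $h(1)=\ln 2-\frac{1}{2}>0$, so $h(p)>0$ for all $p\ge 1$. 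Thus $\frac{d}{dp}\ln g(p)<0$, so $g$ is strictly decreasing; since $g(1)=4$ and $g(p)\to 1$ as $p\to\infty$, we get $1<g(p)<4$ for $p>1$, that is, $\frac{1}{4}<\frac{1}{(p+1)^{2/p}}<1$.

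Finally I would feed the upper bound into Corollary \ref{cet}. Its right-hand side equals $\frac{25(b-a)(d-c)}{324(p+1)^{2/p}}$ times the nonnegative quantity $\left(\frac{\left\vert \partial^{2}f/\partial x\partial y\right\vert^{q}(a,c)+\cdots}{4}\right)^{1/q}$; since $\frac{1}{(p+1)^{2/p}}<1$ and this quantity is $\ge 0$, replacing the factor by $1$ only enlarges the bound. Hence the estimate of Corollary \ref{cet} implies the displayed inequality, with the same expression $A$. The main ``obstacle'' is therefore only the monotonicity computation for $g$, and even that is unnecessary for the stated inequality, which follows at once from $(p+1)^{2/p}>1$.
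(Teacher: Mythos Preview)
Your proposal is correct and follows exactly the approach the paper takes: the paper simply asserts the two-sided bound $\tfrac{1}{4}<\tfrac{1}{(p+1)^{2/p}}<1$ in the statement itself and reads off the weakened inequality from Corollary~\ref{cet}, while you additionally supply a clean verification of that bound (via monotonicity of $g(p)=(p+1)^{2/p}$) that the paper leaves unproved. Nothing more is needed.
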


\begin{theorem}
Let $f:\Delta =[a,b]\times \lbrack c,d]\rightarrow 
\mathbb{R}
$ be a differentiable function on $\Delta $. If $\left\vert \frac{\partial
^{2}f}{\partial x\partial y}\right\vert ^{q}$ is convex function on the
co-ordinates on $\Delta $ and $q\geq 1,$ then one has the inequality:%
\begin{eqnarray*}
&&\left\vert \left( 1-\lambda \right) ^{2}f\left( \frac{a+b}{2},\frac{c+d}{2}%
\right) +\lambda ^{2}\frac{f\left( a,c\right) +f\left( a,d\right) +f\left(
b,c\right) +f\left( b,d\right) }{4}\right. \\
&&+\frac{\lambda \left( 1-\lambda \right) }{2}\left[ f\left( \frac{a+b}{2}%
,c\right) +f\left( \frac{a+b}{2},d\right) +f\left( a,\frac{c+d}{2}\right)
+f\left( b,\frac{c+d}{2}\right) \right] \\
&&-\left( 1-\lambda \right) \frac{1}{d-c}\int_{c}^{d}f(\frac{a+b}{2}%
,y)dy-\left( 1-\lambda \right) \frac{1}{b-a}\int_{a}^{b}f(x,\frac{c+d}{2})dx
\\
&&-\lambda \frac{1}{2\left( b-a\right) }\int_{a}^{b}\left[ f(x,d)+f(x,c)%
\right] dx-\lambda \frac{1}{2\left( d-c\right) }\int_{c}^{d}\left[
f(a,y)+f(b,y)\right] dy \\
&&\left. +\frac{1}{(b-a)(d-c)}\int_{a}^{b}\int_{c}^{d}f(x,y)dxdy\right\vert
\\
&\leq &\frac{(b-a)(d-c)}{16}\left[ 2\lambda ^{2}-2\lambda +1\right] ^{2} \\
&&\times \left( \frac{\left\vert \frac{\partial ^{2}f}{\partial x\partial y}%
\right\vert ^{q}(a,c)+\left\vert \frac{\partial ^{2}f}{\partial x\partial y}%
\right\vert ^{q}(a,d)+\left\vert \frac{\partial ^{2}f}{\partial x\partial y}%
\right\vert ^{q}(b,c)+\left\vert \frac{\partial ^{2}f}{\partial x\partial y}%
\right\vert ^{q}(b,d)}{4}\right) ^{\frac{1}{q}}.
\end{eqnarray*}
\end{theorem}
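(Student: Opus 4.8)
The plan is to mimic the proof of Theorem 5, but to estimate the resulting double integral with the power mean inequality in place of the pointwise bound, exactly as the proof of Theorem 6 used H\"older's inequality. Write $L$ for the quantity inside the absolute value on the left-hand side of the statement. Starting from Lemma 1 and the triangle inequality for integrals gives
\[
|L| \le \frac{1}{(b-a)(d-c)}\int_a^b\int_c^d |K(x)M(y)|\left|\frac{\partial^2 f}{\partial x\partial y}(x,y)\right|dy\,dx .
\]
Now apply the power mean inequality with exponent $q\ge 1$ to the weight $|K(x)M(y)|$: splitting $|KM|\,|\partial^2 f| = |KM|^{1-1/q}\big(|KM|^{1/q}|\partial^2 f|\big)$ and using $\int g h \le (\int g)^{1-1/q}(\int g h^q)^{1/q}$ yields
\[
|L|\le\frac{1}{(b-a)(d-c)}\Big(\int_a^b\int_c^d |KM|\,dy\,dx\Big)^{1-\frac1q}\Big(\int_a^b\int_c^d |KM|\left|\frac{\partial^2 f}{\partial x\partial y}\right|^q dy\,dx\Big)^{\frac1q}.
\]

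For the first factor I would use that $K$ and $M$ separate the variables, so $\int\!\int |KM| = \big(\int_a^b|K|\,dx\big)\big(\int_c^d|M|\,dy\big)$. Each one-dimensional integral is the elementary piecewise computation already performed inside the proof of Theorem 5 (split at the knots $s=\lambda/2$ and $s=1-\lambda/2$ after the change of variable), and equals $(b-a)^2\frac{2\lambda^2-2\lambda+1}{4}$, respectively $(d-c)^2\frac{2\lambda^2-2\lambda+1}{4}$. Writing $P := \int\!\int|KM| = (b-a)^2(d-c)^2\big(\frac{2\lambda^2-2\lambda+1}{4}\big)^2$, the first factor is $P^{1-1/q}$.

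For the second factor I would invoke the co-ordinated convexity of $\left|\frac{\partial^2 f}{\partial x\partial y}\right|^q$: after $x=tb+(1-t)a$ and $y=sd+(1-s)c$,
\[
\Big|\tfrac{\partial^2 f}{\partial x\partial y}\Big|^q \le ts\,M_{bd} + t(1-s)\,M_{bc} + (1-t)s\,M_{ad} + (1-t)(1-s)\,M_{ac},
\]
where $M_{uv} := \left|\frac{\partial^2 f}{\partial x\partial y}\right|^q(u,v)$. The key point is the symmetry of $|K|$ about $x=\frac{a+b}{2}$ and of $|M|$ about $y=\frac{c+d}{2}$ (indeed $K(a+b-x)=-K(x)$), which forces $\int_a^b|K|\,t\,dx = \int_a^b|K|\,(1-t)\,dx = \frac12\int_a^b|K|\,dx$ and likewise for $M$. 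Since both $|KM|$ and each bilinear coefficient factor, every one of the four products $ts,\ t(1-s),\ (1-t)s,\ (1-t)(1-s)$ integrates against $|KM|$ to the common value $\frac14 P$, so the second integral is at most $\frac14 P\,(M_{ac}+M_{ad}+M_{bc}+M_{bd})$.

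Substituting both estimates, the factor $P$ reassembles as $P^{1-1/q}P^{1/q}=P$, and the bound collapses to $\frac{1}{(b-a)(d-c)}\,P\big(\frac{M_{ac}+M_{ad}+M_{bc}+M_{bd}}{4}\big)^{1/q}$, which, after inserting the value of $P$, is exactly the claimed right-hand side with constant $\frac{(b-a)(d-c)}{16}(2\lambda^2-2\lambda+1)^2$. I expect the only delicate step to be the bookkeeping in the second factor: one must verify that the symmetry argument genuinely equalizes all four bilinear contributions, so that the weights collapse to the plain arithmetic mean of the corner values, and then track the exponents $1-\frac1q$ and $\frac1q$ carefully so that $P$ recombines. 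Setting $q=1$ recovers Theorem 5, a convenient consistency check.
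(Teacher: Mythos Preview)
Your proof is correct and follows exactly the paper's approach: Lemma 1, then the power-mean (H\"older with weight $|KM|$) inequality, then co-ordinated convexity of $\left|\partial^{2}f/\partial x\partial y\right|^{q}$ after the change of variables $x=tb+(1-t)a$, $y=sd+(1-s)c$. Your explicit symmetry argument showing that each bilinear weight $ts,\ t(1-s),\ (1-t)s,\ (1-t)(1-s)$ integrates against $|KM|$ to $\tfrac14 P$ is a nice clarification of a step the paper leaves implicit.
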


\begin{proof}
From Lemma 1 and using the well-known Power-mean inequality, we can write%
\begin{eqnarray*}
&&\left\vert \left( 1-\lambda \right) ^{2}f\left( \frac{a+b}{2},\frac{c+d}{2}%
\right) +\lambda ^{2}\frac{f\left( a,c\right) +f\left( a,d\right) +f\left(
b,c\right) +f\left( b,d\right) }{4}\right. \\
&&+\frac{\lambda \left( 1-\lambda \right) }{2}\left[ f\left( \frac{a+b}{2}%
,c\right) +f\left( \frac{a+b}{2},d\right) +f\left( a,\frac{c+d}{2}\right)
+f\left( b,\frac{c+d}{2}\right) \right] \\
&&-\left( 1-\lambda \right) \frac{1}{d-c}\int_{c}^{d}f(\frac{a+b}{2}%
,y)dy-\left( 1-\lambda \right) \frac{1}{b-a}\int_{a}^{b}f(x,\frac{c+d}{2})dx
\\
&&-\lambda \frac{1}{2\left( b-a\right) }\int_{a}^{b}\left[ f(x,d)+f(x,c)%
\right] dx-\lambda \frac{1}{2\left( d-c\right) }\int_{c}^{d}\left[
f(a,y)+f(b,y)\right] dy \\
&&\left. +\frac{1}{(b-a)(d-c)}\int_{a}^{b}\int_{c}^{d}f(x,y)dxdy\right\vert
\\
&\leq &\frac{1}{(b-a)(d-c)}\left( \int_{a}^{b}\int_{c}^{d}\left\vert
K(x)M(y)\right\vert dydx\right) ^{1-\frac{1}{q}} \\
&&\times \left( \int_{a}^{b}\int_{c}^{d}\left\vert K(x)M(y)\right\vert
\left\vert \frac{\partial ^{2}f}{\partial x\partial y}(x,y)\right\vert
^{q}dydx\right) ^{\frac{1}{q}}.
\end{eqnarray*}%
Since $\left\vert \frac{\partial ^{2}f}{\partial x\partial y}\right\vert
^{q} $ is convex function on the co-ordinates on $\Delta ,$ by taking into
account the change of variable $x=tb+(1-t)a,$ $(b-a)dt=dt$ and $y=sd+(1-s)c,$
$(d-c)ds=dy,$ we have%
\begin{equation*}
\left\vert \frac{\partial ^{2}f}{\partial x\partial y}\left( tb+\left(
1-t\right) a,y\right) \right\vert ^{q}\leq t\left\vert \frac{\partial ^{2}f}{%
\partial x\partial y}\left( b,y\right) \right\vert ^{q}+\left( 1-t\right)
\left\vert \frac{\partial ^{2}f}{\partial x\partial y}\left( a,y\right)
\right\vert ^{q}
\end{equation*}%
and%
\begin{eqnarray*}
&&\left\vert \frac{\partial ^{2}f}{\partial x\partial y}\left( tb+\left(
1-t\right) a,sd+\left( 1-s\right) c\right) \right\vert ^{q} \\
&\leq &ts\left\vert \frac{\partial ^{2}f}{\partial x\partial y}\right\vert
^{q}(b,d)+t\left( 1-s\right) \left\vert \frac{\partial ^{2}f}{\partial
x\partial y}\right\vert ^{q}(b,c) \\
&&+s\left( 1-t\right) \left\vert \frac{\partial ^{2}f}{\partial x\partial y}%
\right\vert ^{q}(a,d)+\left( 1-t\right) (1-s)\left\vert \frac{\partial ^{2}f%
}{\partial x\partial y}\right\vert ^{q}(a,c)
\end{eqnarray*}%
hence, it follows that%
\begin{eqnarray*}
&&\left\vert \left( 1-\lambda \right) ^{2}f\left( \frac{a+b}{2},\frac{c+d}{2}%
\right) +\lambda ^{2}\frac{f\left( a,c\right) +f\left( a,d\right) +f\left(
b,c\right) +f\left( b,d\right) }{4}\right. \\
&&+\frac{\lambda \left( 1-\lambda \right) }{2}\left[ f\left( \frac{a+b}{2}%
,c\right) +f\left( \frac{a+b}{2},d\right) +f\left( a,\frac{c+d}{2}\right)
+f\left( b,\frac{c+d}{2}\right) \right] \\
&&-\left( 1-\lambda \right) \frac{1}{d-c}\int_{c}^{d}f(\frac{a+b}{2}%
,y)dy-\left( 1-\lambda \right) \frac{1}{b-a}\int_{a}^{b}f(x,\frac{c+d}{2})dx
\\
&&-\lambda \frac{1}{2\left( b-a\right) }\int_{a}^{b}\left[ f(x,d)+f(x,c)%
\right] dx-\lambda \frac{1}{2\left( d-c\right) }\int_{c}^{d}\left[
f(a,y)+f(b,y)\right] dy \\
&&\left. +\frac{1}{(b-a)(d-c)}\int_{a}^{b}\int_{c}^{d}f(x,y)dxdy\right\vert
\\
&\leq &\frac{(b-a)(d-c)}{16}\left[ 2\lambda ^{2}-2\lambda +1\right] ^{2} \\
&&\times \left( \frac{\left\vert \frac{\partial ^{2}f}{\partial x\partial y}%
\right\vert ^{q}(a,c)+\left\vert \frac{\partial ^{2}f}{\partial x\partial y}%
\right\vert ^{q}(a,d)+\left\vert \frac{\partial ^{2}f}{\partial x\partial y}%
\right\vert ^{q}(b,c)+\left\vert \frac{\partial ^{2}f}{\partial x\partial y}%
\right\vert ^{q}(b,d)}{4}\right) ^{\frac{1}{q}}.
\end{eqnarray*}%
Which completes the proof.
\end{proof}

\begin{remark}
Under the assumptions of Theorem 7, if we choose $\lambda =1,$ we have the
inequality (\ref{1.5}).
\end{remark}

\begin{remark}
Under the assumptions of Theorem 7, if we choose $\lambda =0,$ we have the
inequality (\ref{1.8}).
\end{remark}

\begin{corollary}
Under the assumptions of Theorem 7, if we choose $\lambda =\frac{1}{3},$ we
have%
\begin{eqnarray*}
&&\left\vert \frac{f\left( a,\frac{c+d}{2}\right) +f\left( b,\frac{c+d}{2}%
\right) +4f\left( \frac{a+b}{2},\frac{c+d}{2}\right) +f\left( \frac{a+b}{2}%
,c\right) +f\left( \frac{a+b}{2},d\right) }{9}\right. \\
&&\left. +\frac{f\left( a,c\right) +f\left( b,c\right) +f\left( a,d\right)
+f\left( b,d\right) }{36}+\frac{1}{\left( b-a\right) \left( d-c\right) }%
\int_{a}^{b}\int_{c}^{d}f\left( x,y\right) dydx-A\right\vert \\
&\leq &\frac{25\left( b-a\right) \left( d-c\right) }{36^{2}} \\
&&\times \left( \frac{\left\vert \frac{\partial ^{2}f}{\partial x\partial y}%
\right\vert ^{q}(a,c)+\left\vert \frac{\partial ^{2}f}{\partial x\partial y}%
\right\vert ^{q}(a,d)+\left\vert \frac{\partial ^{2}f}{\partial x\partial y}%
\right\vert ^{q}(b,c)+\left\vert \frac{\partial ^{2}f}{\partial x\partial y}%
\right\vert ^{q}(b,d)}{4}\right) ^{\frac{1}{q}}.
\end{eqnarray*}%
where%
\begin{eqnarray*}
A &=&\frac{1}{6\left( b-a\right) }\int_{a}^{b}\left[ f\left( x,c\right)
+4f\left( x,\frac{c+d}{2}\right) +f\left( x,d\right) \right] dx \\
&&+\frac{1}{6\left( d-c\right) }\int_{c}^{d}\left[ f\left( a,y\right)
+4f\left( \frac{a+b}{2},y\right) +f\left( b,y\right) \right] dy.
\end{eqnarray*}
\end{corollary}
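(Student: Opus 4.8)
The plan is to obtain this statement as a direct specialization of Theorem 7 with $\lambda=\frac13$; no new analytic input is required, only a careful accounting of the resulting coefficients. So first I would record the relevant values at $\lambda=\frac13$: namely $(1-\lambda)^2=\frac49$, $\lambda^2=\frac19$, $\frac{\lambda(1-\lambda)}{2}=\frac19$, and $2\lambda^2-2\lambda+1=\frac59$. These four quantities govern, respectively, the weight on the central value $f(\frac{a+b}{2},\frac{c+d}{2})$, the weight on the average of the four corner values, the weight on the four edge-midpoint values, and (through its square) the constant on the right-hand side.

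Next I would substitute these values into the left-hand side of Theorem 7. The term $(1-\lambda)^2 f(\frac{a+b}{2},\frac{c+d}{2})=\frac49 f(\frac{a+b}{2},\frac{c+d}{2})$ together with $\frac{\lambda(1-\lambda)}{2}=\frac19$ times the four edge-midpoint values assembles exactly into the single fraction with denominator $9$ in the corollary, because that fraction expands as $\frac49 f(\frac{a+b}{2},\frac{c+d}{2})$ plus $\frac19$ times the sum of $f(a,\frac{c+d}{2})$, $f(b,\frac{c+d}{2})$, $f(\frac{a+b}{2},c)$, $f(\frac{a+b}{2},d)$. Likewise $\lambda^2=\frac19$ times the corner average becomes the term with denominator $36$. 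For the integral contributions, the factor $1-\lambda=\frac23$ in front of the two single integrals over the medial lines, combined with the factor $\frac{\lambda}{2}=\frac16$ in front of the boundary-line integrals, must be recognised as precisely $-A$: collecting $\frac23\cdot\frac1{b-a}\int_a^b f(x,\frac{c+d}{2})\,dx$ with the $\frac16$-weighted integrals of $f(x,c)$ and $f(x,d)$ reproduces the $x$-block $\frac1{6(b-a)}\int_a^b[f(x,c)+4f(x,\frac{c+d}{2})+f(x,d)]\,dx$, and symmetrically for the $y$-variable. Here the factor $4$ inside $A$ is exactly $\frac{2/3}{1/6}$, so the weights fit against the common normalisation $\frac16$.

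Finally I would simplify the right-hand constant: $\frac{(b-a)(d-c)}{16}\bigl[2\lambda^2-2\lambda+1\bigr]^2=\frac{(b-a)(d-c)}{16}\cdot\bigl(\frac59\bigr)^2=\frac{25(b-a)(d-c)}{1296}=\frac{25(b-a)(d-c)}{36^2}$, which is exactly the stated factor, the convex-combination average of the corner values of $\left|\frac{\partial^2 f}{\partial x\partial y}\right|^q$ being carried over unchanged from Theorem 7. The only genuine care needed is the clerical reorganisation of the four integral terms into the compact quantity $A$; once the weights $\frac23$ and $\frac16$ are matched against the coefficients $4$ and $1$ in the definition of $A$, the identity is immediate and the proof is complete.
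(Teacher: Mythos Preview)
Your proposal is correct and follows exactly the approach the paper intends: the corollary is obtained by direct substitution of $\lambda=\tfrac{1}{3}$ into Theorem~7, and your verification of the coefficients $(1-\lambda)^2=\tfrac49$, $\lambda^2=\tfrac19$, $\tfrac{\lambda(1-\lambda)}{2}=\tfrac19$, $2\lambda^2-2\lambda+1=\tfrac59$, the regrouping of the integral terms into $A$, and the constant $\tfrac{1}{16}\cdot\tfrac{25}{81}=\tfrac{25}{36^2}$ are all accurate.
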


\end{document}